\documentclass[review,onefignum,onetabnum]{siamart171218}
\usepackage{amsmath}
\usepackage{amssymb}
\usepackage{enumerate}
\usepackage{graphicx}
\graphicspath{{figures/}}
\setkeys{Gin}{draft=false}
\usepackage{algorithm,algorithmic}
\usepackage{caption}
\usepackage{array}
\usepackage{pgfplots, pgfplotstable, booktabs, colortbl, array}
\usepackage{multirow}
\usepackage{placeins}
\usepackage{bm}

\newcommand{\FEP}{\textsc{FEP}\xspace}
\newcommand{\FIP}{\textsc{FIP}\xspace}
\newcommand{\FIPSG}{\textsc{FIP-SG}\xspace}
\newcommand{\FIPSGEBC}{\textsc{FIP-SG-EBC}\xspace}
\newcommand{\FIPSGEFC}{\textsc{FIP-SG-EFC}\xspace}

\usepackage{float}
\floatstyle{plaintop}
\restylefloat{table}
\usepackage{todonotes}

\newcommand{\ignore}[1]{}

\usepackage{xspace}

\usepackage[leqno]{amsmath}
\usepackage{amsfonts}
\usepackage{amssymb}
\newcommand{\norm}[1]{\|#1\|}
\newcommand{\abs}[1]{|#1|}

\nolinenumbers


\usepackage{lipsum}
\usepackage{amsfonts}
\usepackage{graphicx}
\usepackage{epstopdf}
\usepackage{algorithmic}
\ifpdf
  \DeclareGraphicsExtensions{.eps,.pdf,.png,.jpg}
\else
  \DeclareGraphicsExtensions{.eps}
\fi

\newsiamremark{remark}{Remark}
\newsiamremark{hypothesis}{Hypothesis}
\crefname{hypothesis}{Hypothesis}{Hypotheses}
\newsiamthm{claim}{Claim}

\headers{Numerical schemes for 1D CCP}{Chen, Fan, Fan, Guan, Liu, Qiao, Wei, You and Zou}

\title{Numerical schemes for 1D capacitively coupled plasma fluid simulation\thanks{Submitted to the editors DATE.
\funding{The work of Zhonghua Qiao is partially sponsored by NSFC/RGC Joint Research Scheme grant N\_PolyU5145/24 and Hong Kong Research Grants Council GRF grant 15305624.}}}

\author{
Yipei Chen\thanks{Theory Lab, Huawei Leibniz Research Center, Sha Tin, Hong Kong SAR, People's Republic of China. (\email{chen.yipei1@huawei.com}, 
\email{fanganghua1@huawei.com},
\email{fanyuwei2@huawei.com}, \email{guan.haoran@huawei.com}, \email{youpeiting@huawei.com}},
\email{liukangxin3@huawei.com})~
\and
Ganghua Fan\footnotemark[2]~
\and
Yuwei Fan\footnotemark[2]~
\and
Haoran Guan\footnotemark[2]~
\and
Kangxin Liu\footnotemark[2]~
\and
Zhonghua Qiao\thanks{Department of Applied Mathematics, Hong Kong Polytechnic University, Hung Hom, Hong Kong SAR, People's Republic of China. (\email{zhonghua.qiao@polyu.edu.hk})}~
\and
Yufeng Wei\thanks{Corresponding author. Theory Lab, Huawei Leibniz Research Center, Sha Tin, Hong Kong SAR, People's Republic of China. (\email{wei.yufeng@huawei.com})}~
\and
Peiting You\footnotemark[2]~
\and
Zhenyu Zou\thanks{Department of Mechanical and Automation Engineering, The Chinese University of Hong Kong, Sha Tin, Hong Kong SAR, People's Republic of China. (\email{zyzou@mae.cuhk.edu.hk})}
}

\usepackage{amsopn}

\makeatletter
\newcommand*{\addFileDependency}[1]{% argument=file name and extension
  \typeout{(#1)}% latexmk will find this if $recorder=0 (however, in that case, it will ignore #1 if it is a .aux or .pdf file etc and it exists! if it doesn't exist, it will appear in the list of dependents regardless)
  \@addtofilelist{#1}% if you want it to appear in \listfiles, not really necessary and latexmk doesn't use this
  \IfFileExists{#1}{}{\typeout{No file #1.}}% latexmk will find this message if #1 doesn't exist (yet)
}
\makeatother

\newcommand*{\myexternaldocument}[1]{%
    \externaldocument{#1}%
    \addFileDependency{#1.tex}%
    \addFileDependency{#1.aux}%
}
\ifpdf
\hypersetup{
  pdftitle={A new numerical scheme for 1D capacitively coupled plasma fluid simulation},
  pdfauthor={Yipei Chen, Ganghua Fan, Yuwei Fan, Haoran Guan, Kangxin Liu, Zhonghua Qiao, Yufeng Wei, Peiting You, Zhenyu Zou}
}
\fi

\myexternaldocument{ex_supplement}

\begin{document}

\maketitle

% REQUIRED
\begin{abstract}
Capacitively coupled plasmas (CCP) are widely used in semiconductor manufacturing, where fluid models provide an efficient approach to the simulation of the low-temperature discharge. The model considered in this work couples the electron and the ion continuity equations, the electron energy equation and the Poisson's equation for the electric potential. We investigate the numerical schemes for the simulation of the 1D-CCP. In particular, a semi-implicit Scharfetter-Gummel (SG) scheme is adopted for the Poisson's equation, while the remaining equations are solved using the implicit SG schemes. The proposed strategy can be naturally extended to the 2D cases. Several alternative schemes are also discussed and tested. Numerical experiments compare these methods in terms of the admissible time step $\Delta t$, total simulation time, accuracy against a reference solution and robustness of the scheme. We propose a numerical scheme for the 1D-CCP and, to the best of our knowledge, provide the first detailed numerical analysis of such a scheme for CCP.
\end{abstract}

% REQUIRED
\begin{keywords}
Capacitively coupled plasma, Scharfetter-Gummel flux, Semi-implicit scheme, Numerical PDE
\end{keywords}

% REQUIRED
\begin{AMS}
65M06, 65N06, 76X05, 35K57
\end{AMS}

\section{Introduction}

\subsection{Model and background knowledge}
Capacitively coupled plasmas (CCPs) are among the representative radio-frequency low-temperature discharges which are widely used in semiconductor manufacturing, including plasma etching, thin-film deposition and surface treatment \cite{AlvesMarques2012,Bi2011}. They also serve as valuable model systems for their simple reactor configuration and rich dynamics to study sheath evolution, electron heating, power deposition and plasma uniformity \cite{BoeufPitchford1995,Zhang2012a,Zhang2012b}.

Fluid models are widely used for low-temperature plasma simulations because they resolve the charged-particle transport, electrostatic potential and the electron energy at a substantially lower cost than the kinetic models. Therefore, they have become an important tool for analyzing the radio-frequency discharges \cite{AlvesMarques2012, Belenguer1990, Boeuf1987, ChenRaja2004, Kim2005, Surendra1990}. In particular, the configuration of the 1D-CCP shown in \cref{fig:bound} retains the essential coupling among charged-particle transport, electron energy and the electrostatic field. The 1D model has long been used to study RF discharges while retaining the self-consistent electron-energy dynamics \cite{GogolidesSawin1992}. Moreover, simplified capacitive-discharge configurations provide controlled settings of the benchmark in which numerical accuracy and efficiency can be assessed without the additional complexity associated with multidimensional geometries \cite{Turner2013}. These features make the 1D-CCP a suitable setting for examining the numerical behavior of time-advancement, flux-discretization and field-charge coupling schemes before extending them to multidimensional configurations. The model considered here consists of four governing equations of three different types and is introduced below.

\begin{figure}
\centering
\includegraphics[width=0.65\textwidth]{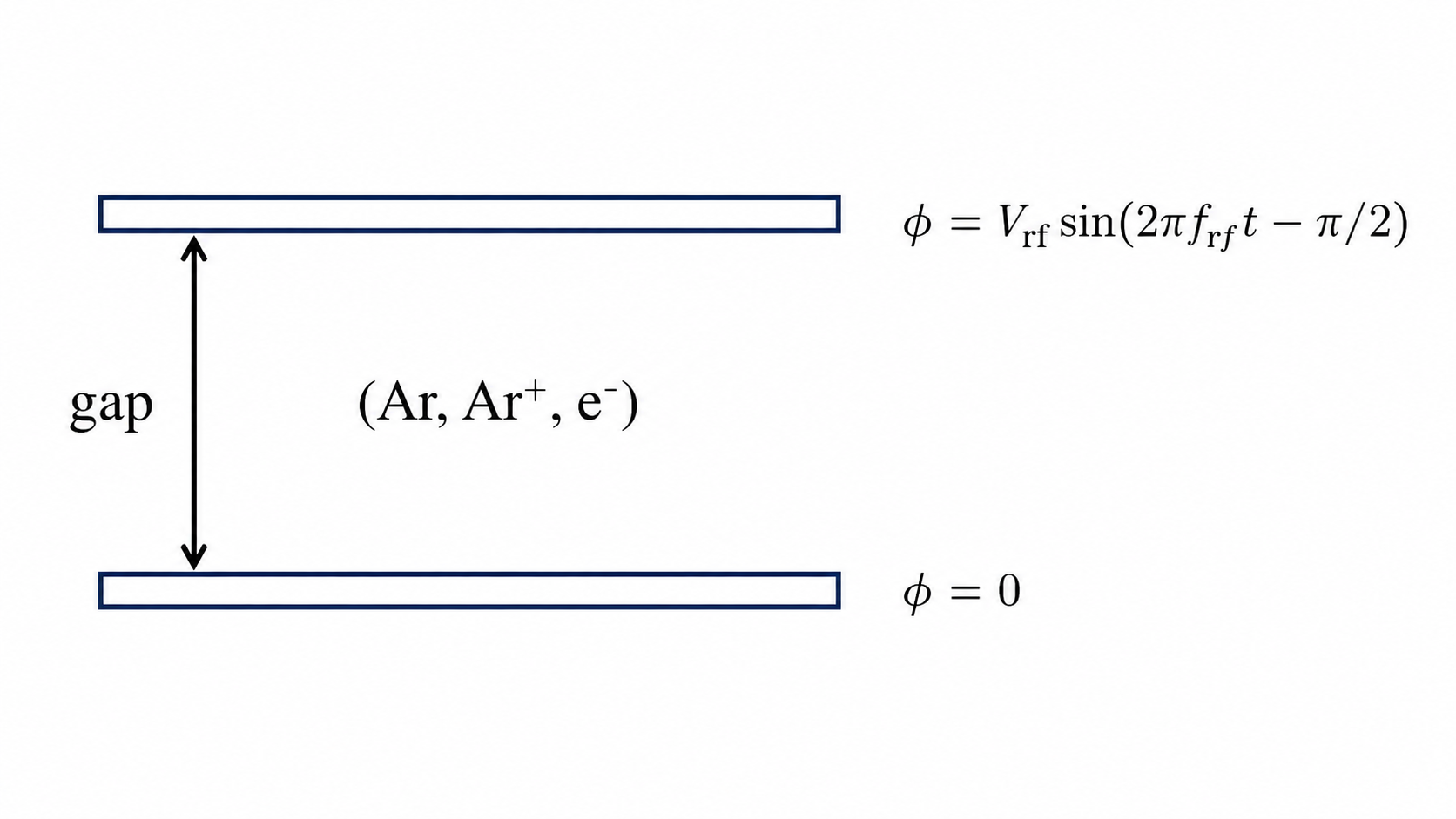} % 可以根據需要調整圖片寬度比例
\caption{1D-CCP in the domain $[0,L]$}
\label{fig:bound}
\end{figure}

\begin{subequations}\label{eq:system}
\begin{align}
    \frac{\partial^2 \phi}{\partial x^2}
    = \frac{e}{\epsilon_0}(n_e-n_+),
    \label{eq:poisson}\\
    \frac{\partial n_e}{\partial t}
    + \frac{\partial \mbox{J}_e}{\partial x}
    = S_e,
    \label{eq:1}\\
    \frac{\partial n_+}{\partial t}
    + \frac{\partial \mbox{J}_+}{\partial x}
    = S_+,
    \label{eq:2}\\
\frac{\partial E_{e}}{\partial t}+\frac{\partial \mbox{q}_{e}}{\partial x}-e\mbox{J}_{e} \cdot \frac{\partial \phi}{\partial x}+H_{r}S_{e}=0. \label{eq:3a}
\end{align}
\end{subequations}

In \eqref{eq:system}, $n_{e}$ and $n_{+}$ are the densities of the electron and ion. $\phi$ is the electric potential which is computed by solving the Poisson equation \eqref{eq:poisson} with $n_{e}$ and $n_{+}$. $e$ and $\epsilon_{0}$ are the elementary charge and vacuum permittivity. \eqref{eq:1} and \eqref{eq:2} are the continuity equations governing the evolution of $n_{e}$ and $n_{+}$. The corresponding fluxes, $\mbox{J}_{e}$ and $\mbox{J}_{+}$, are in the form of drift-diffusion terms as
\begin{equation}
\mbox{J}_{e} = -D_{e}\frac{\partial n_{e}}{\partial x}+\mu_{e}n_{e}\frac{\partial\phi}{\partial x}, \quad
\mbox{J}_{+} = -D_{+}\frac{\partial n_{+}}{\partial x}-\mu_{+}n_{+}\frac{\partial\phi}{\partial x}. \label{eq:flux}
\end{equation}
In \eqref{eq:flux}, $D_{l}, \mu_{l}>0$ denote the diffusion coefficients and drift mobilities respectively with $l=\{e,+\}$. Under typical conditions, we have $D_{e} \gg D_{+}$ and $\mu_{e} \gg \mu_{+}$. \eqref{eq:3a} is the electron energy equation with 
\begin{equation}
E_{e} = \frac{3}{2}n_{e}kT_{e}, \quad
\mbox{q}_{e} = -\frac{3}{2}kD_{e}n_{e}\frac{\partial T_{e}}{\partial x}+\frac{5}{2}kT_{e}\mbox{J}_{e}. \label{eq:fluxee}
\end{equation}
In \eqref{eq:fluxee}, $E_{e}$ and $\mbox{q}_{e}$ are the electron energy and electron energy flux. $T_{e}$ denotes the electron temperature. $k>0$ and $H_{r}>0$ are the Boltzmann constant and energy lost per chemical reaction. For the chemical reaction term $S_{e}$ and $S_{+}$ in \eqref{eq:1}-\eqref{eq:3a}, we have
\begin{equation}
S_{e} = S_{+} = k_{r}N_{0}n_{e}. \label{eq:source}
\end{equation}
In \eqref{eq:source}, $k_{r}>0$ is the reaction coefficient which is influenced by $T_{e}$ and updated by solving \eqref{eq:3a}. $N_{0}>0$ is the density of the neutral gas within the domain.

Low-temperature plasma fluid models are strongly coupled and inherently multiscale. Electron transport and energy relaxation occur much faster than ion and neutral transport, while sheath expansion and collapse, electrostatic-field evolution, chemical reactions and processes on the surface introduce further time scales \cite{AlvesMarques2012, Kushner2009}. In the RF-driven CCP, resolving these processes over many cycles to reach a periodic steady state makes time integration a major computational expense.

A common strategy for improving the efficiency of the plasma-fluid simulation is to enlarge the time step through the implicit or the semi-implicit treatment of the stiff terms, including the semi-implicit field-charge coupling, implicit electron-energy sources, improved explicit drift-diffusion schemes and the fully coupled implicit solvers \cite{ArslanbekovKolobov2021, HagelaarKroesen2000, Teunissen2020, Ventzek1993, VermaVenkattraman2021}. Although these approaches relax the constraints for the numerical stability, the stronger implicit coupling also increases the algebraic complexity and the cost per update. Thus, a larger stable time step does not necessarily reduce the overall computational time.

The cost of the long-time RF plasma simulations also depends on the number of cycles needed to reach a periodic state. Accordingly, convergence acceleration, period-based parallelization, and multiple-time-step strategies have been explored to reduce the integration  cost \cite{Kim2004, Kwon2017, LiWang2025}. The efficiency of the whole simulation reflects a trade-off among the allowable time-step range, per-step cost, numerical robustness and the long-time accuracy. 

\subsection{Main contribution}
This work systematically studies the numerical schemes within a common finite-volume 1D-CCP framework. We propose a new mixed numerical strategy combining a semi-implicit solver for the Poisson equation with the implicit Scharfetter--Gummel discretizations for the continuity and the electron-energy equations \cite{SG2,SG1}. Some alternative schemes are also considered and evaluated in terms of the maximum usable time step, total simulation time, accuracy relative to a baseline solution and the numerical robustness. Although examined in the 1D cases, these methods extend naturally to the simulation of 2D-CCP. To the best of our knowledge, this work provides the first mathematical numerical analysis of the time-advancement schemes for the simulation of CCP that balances the theoretical considerations with the practical performance.

\subsection{Outline and notation}
The remainder of this paper is organized as follows. \cref{sec:schemes} introduces the existing schemes and our proposed SG-based scheme for the 1D-CCP. We provide the corresponding numerical analysis of our scheme in \cref{sec:analysis}. Numerical experiments are presented in \cref{sec:experiments}, followed by conclusions in \cref{sec:conclusions}.

We use the following notation in this work. For $X \in \mathbb{R}^{n\times n}$ and $b \in \mathbb{R}^n$, $X,b \ge 0$ means that all their entries are nonnegative. For $X \in \mathbb{R}^{n\times n}$, $\sigma_n(X)$ and $\lambda_n(X)$ denote its $n$-th largest singular value and eigenvalue, respectively. For the norms of $A$ and $b$, we take $\norm{b}_1=\sum\limits_{i=0}^{n-1}\abs{b_i}, \quad \norm{b}_2=\sqrt{\sum\limits_{i=0}^{n-1}b_i^2}, \quad \norm{A}_1=\max \limits_{0\leq j\leq n-1}\sum \limits_{i=0}^{n-1}\abs{a_{ij}}, \quad \norm{A}_2=\sigma_1(A)$.
For $A,B \in \mathbb{R}^{m\times n}$, $A \ge B$ denotes the entrywise inequality, $a_{ij} \ge b_{ij}$, for all $i$ and $j$.

\section{Numerical schemes of the model}
\label{sec:schemes}
This section presents numerical schemes for solving the system of the 1D-CCP \eqref{eq:system}. We discretize the spatial domain $[0,L]$ using a uniform mesh with spacing $\Delta x=\frac{L}{N}$, where $N$ is the number of cells. The cells are indexed from $0$ to $N-1$ with the interface between the $i$-th and the $i+1$-th cells denoted by $i+\frac{1}{2}$, see \cref{fig:grid}. Let $c_i$ denote the value of $c \in \{n_e, n_+, T_e, \phi, S_e\}$ in the $i$-th cell. Using
$-e\mbox{J}_e\frac{\partial\phi}{\partial x}
=-e\frac{\partial(\mbox{J}_e\phi)}{\partial x}
+e\phi\frac{\partial\mbox{J}_e}{\partial x}$, \eqref{eq:3} can be rewritten as
\begin{align}
\frac{\partial E_{e}}{\partial t}+\frac{\partial \mbox{G}}{\partial x} &+e\phi \cdot \frac{\partial \mbox{J}_{e}}{\partial x}=-H_{r}S_{e}, \label{eq:3} \\
\mbox{G} &=\mbox{q}_{e}-e \cdot (\mbox{J}_{e}\phi). \label{eq:fluxg}
\end{align}
We use \eqref{eq:3}-\eqref{eq:fluxg} to update $T_e$, which is equivalent to solving \eqref{eq:3a}. For a quantity $h \in \{n_e, n_+, T_e, \phi, S_e, \mbox{J}_e, \mbox{J}_+, \mbox{G}\}$, $h^n$ denotes its value at the $n$-th time step with $\Delta t$ the time step in the simulation.

\begin{figure}
\centering
\includegraphics[width=0.60\textwidth]{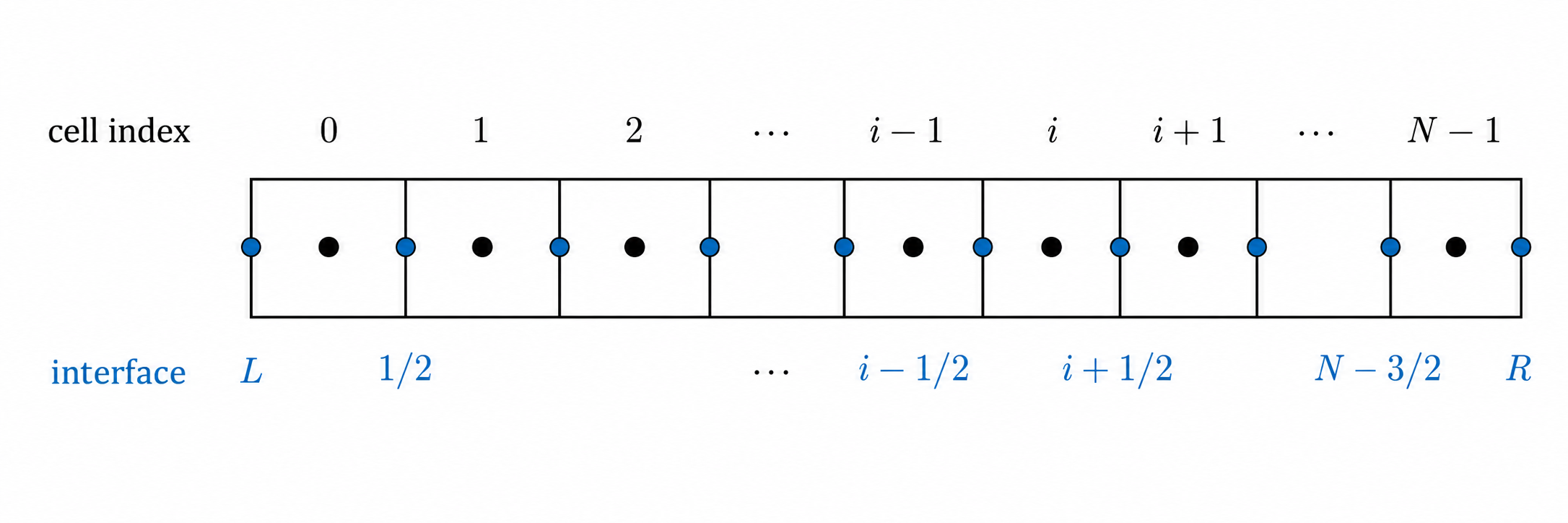} % 可以根據需要調整圖片寬度比例
\caption{Cells and interfaces in the domain $[0,L]$}
\label{fig:grid}
\end{figure}

\subsection{Existing numerical schemes}
Several numerical schemes have been developed for simulating the 1D-CCP. For the Poisson’s equation \eqref{eq:poisson}, an explicit finite difference discretization can be applied at the $i$-th interior grid points with $1 \le i \le N-2$, which is in the form of 
\begin{equation}
\frac{\partial^{2}\phi^{n+1}}{\partial x^{2}}=\frac{\phi_{i+1}^{n+1}-2\phi_{i}^{n+1}+\phi_{i-1}^{n+1}}{(\Delta x)^{2}} =\frac{e}{\epsilon_{0}} \cdot (n_{e,i}^{n}-n_{+,i}^{n}), \label{eq:poissone} 
\end{equation}
to compute $\phi^{n+1}$ at the $(n+1)$-th time step. For the continuity equations and the electron energy equation \eqref{eq:1}-\eqref{eq:3a}, they can be reformulated in a unified form using
$E_{e,i}^{n+1}=\frac{3}{2}n_{e,i}^{n+1}kT_{e,i}^{n+1}$ as
\begin{align}
\frac{W_{i}^{n + 1} - W_{i}^{n}}{\Delta t} &+ \frac{\mbox{F}_{i + \frac{1}{2}}^{p} - \mbox{F}_{i - \frac{1}{2}}^{p}}{\Delta x} +S_{i}^{n} = 0, \quad  
W_{i}^{n+1} = \begin{bmatrix} n_{e,i}^{n+1} \\ n_{+,i}^{n+1} \\ T_{e,i}^{n+1} \end{bmatrix} , \quad W_{i}^{n} = \begin{bmatrix} n_{e,i}^{n} \\ n_{+,i}^{n} \\ \frac{2E_{e,i}^{n}}{3kn_{e,i}^{n+1}} \end{bmatrix}, \label{eq:tuie} \\ 
S_{i}^{n} &= \begin{bmatrix} -S_{e,i}^{n} \\ -S_{+,i}^{n} \\ \frac{2}{3kn_{e,i}^{n+1}} \cdot H_{r}S_{e,i}^{n} \end{bmatrix}, \quad
\mbox{F}_{i + \frac{1}{2}}^{p} = \begin{bmatrix} \mbox{J}_{e,i + \frac{1}{2}}^{p} \\ \mbox{J}_{+,i + \frac{1}{2}}^{p} \\ \frac{2}{3kn_{e,i}^{n+1}} \cdot \left( \mbox{G}_{i + \frac{1}{2}}^{p} + e\phi_{i}^{n+1}\mbox{J}_{e,i + \frac{1}{2}}^{n} \right) \end{bmatrix}, \label{eq:definition} \\
\mbox{J}_{e,i + \frac{1}{2}}^{p} &= - D_{e} \cdot \frac{n_{e,i + 1}^{p} - n_{e,i}^{p}}{\Delta x}+\mu_{e} \cdot n_{e,i + \frac{1}{2}}^{\mathrm{u},p} \cdot \frac{\phi_{i + 1}^{n+1} - \phi_{i}^{n+1}}{\Delta x}, \label{eq:je12} \\
\mbox{J}_{+,i + \frac{1}{2}}^{p} &= - D_{+} \cdot \frac{n_{+,i + 1}^{p} - n_{+,i}^{p}}{\Delta x}- \mu_{+}n_{+,i + \frac{1}{2}}^{\mathrm{d},p} \cdot \frac{\phi_{i + 1}^{n+1} - \phi_{i}^{n+1}}{\Delta x}, \label{eq:j+12} \\
\mbox{G}_{i + \frac{1}{2}}^{p} &= - \frac{3}{2}kD_{e}n_{e,i + \frac{1}{2}}^{\mathrm{u},n} \cdot \frac{T_{e,i + 1}^{p} - T_{e,i}^{p}}{\Delta x}+\left( \frac{5}{2}kT_{e,i + \frac{1}{2}}^{\mathrm{u},p} - e\phi_{i + \frac{1}{2}}^{\mathrm{u},n+1} \right) \cdot \mbox{J}_{e,i + \frac{1}{2}}^{n}. \label{eq:g12} 
\end{align}
The source term in \eqref{eq:tuie} is treated explicitly as $S_{e,i}^{n}=k_{r,i}^{n}N_{0}n_{e,i}^{n}$, where $k_{r,i}^{n}$ is computed using $T_{e,i}^{n-1}$. For the variables $m_{i+\frac{1}{2}}^{u}$ with $m \in \{n_e,\phi,T_e\}$ on the interface and $n_{+,i+\frac{1}{2}}^{d}$ in \eqref{eq:je12}-\eqref{eq:g12}, the upwind and the downwind schemes are defined based on $\phi$ as
\begin{align} 
m_{i + \frac{1}{2}}^{\mathrm{u}} = \left\{
\begin{array}{ll}
m_{i}, & \phi_{i + 1} \ge \phi_{i} \\
m_{i+1}, & \phi_{i + 1}<\phi_{i}
\end{array}, 
\right. \quad 
n_{+,i + \frac{1}{2}}^{\mathrm{d}} = \left\{
\begin{array}{ll}
n_{+,i+1}, & \phi_{i+1} \ge \phi_{i} \\
n_{+,i}, & \phi_{i+1}<\phi_{i}
\end{array}.  \label{eq:wind}
\right.
\end{align}
For the explicit discretization of \eqref{eq:tuie}, we take $p=n$, whereas the implicit discretization is done with $p=n+1$. To update $T_e^{n+1}$ explicitly, $E_e^{n+1}$ is first computed and $T_e^{n+1}$ is then computed using $n_e^{n+1}$ obtained from the electron continuity equation. For the implicit scheme, there are two potential approaches for $n_e^{n+1}$. The first is to solve three separate linear systems of the size $N$ to obtain $n_e^{n+1}$, $n_+^{n+1}$ and $T_e^{n+1}$ sequentially, allowing the updated $n_e^{n+1}$ to be used directly. Alternatively, the three variables can be solved simultaneously through a coupled linear system of the size $3N$. In the latter approach, $n_{e,i}^{n+1}$ is approximated by an explicit update with a sufficiently small time step $\Delta t_0$. In terms of the boundary conditions for $T_e$ and $\phi$, they are given by
\begin{align} 
T_{e,j} = \left\{
\begin{array}{ll}
0.5, & j=L \\
0.5, & j=R
\end{array}, 
\right. \quad
\phi_{j} = \left\{
\begin{array}{ll}
\phi_{RF}\mbox{sin}(2\pi f_{RF}t-\frac{\pi}{2}),  &j=L \\
0, &j=R
\end{array}.
\right. \label{eq:bb}
\end{align}
In \eqref{eq:bb}, $\phi_{RF}>0$ denotes the amplitude of the RF driving voltage and $f_{RF}>0$ is the corresponding radio frequency. For the numerical fluxes, with $p \in \{n,n+1\}$, we have
\begin{align}
\mbox{J}_{+,L}^{p} &= \left\{
\begin{array}{ll}
-2\mu_{+}n_{+,0}^{p} \cdot \frac{\phi_{0}^{n+1} - \phi_{L}^{n+1}}{\Delta x}, & \phi_{0}^{n+1} \ge \phi_{L}^{n+1} \\
0, & \phi_{0}^{n+1}<\phi_{L}^{n+1}
\end{array},
\right. \quad 
\mbox{J}_{e,L}^{p} = - K_{s}n_{e,0}^{p} - \gamma\mbox{J}_{+,L}^{n}, \label{eq:jl} \\
\mbox{G}_{L}^{p} &= \mbox{G}_{-\frac{1}{2}}^{p} = -3kD_{e}n_{e,0}^{n}\frac{T_{e,0}^{p} - T_{e,L}^{p}}{\Delta x}+\left( \frac{5}{2}k \cdot \frac{T_{e,L}^{p} + T_{e,0}^{p}}{2} - e\phi_{L}^{n+1} \right)\mbox{J}_{e,L}^{n}, \label{eq:gl} \\
\mbox{J}_{+,R}^{p} &= \left\{
\begin{array}{ll}
-2\mu_{+}n_{+,N-1}^{p} \cdot \frac{\phi_{R}^{n+1} - \phi_{N-1}^{n+1}}{\Delta x}, & \phi_{R}^{n+1} \le \phi_{N-1}^{n+1} \\
0, & \phi_{R}^{n+1}>\phi_{N-1}^{n+1}
\end{array},
\right. \quad
\mbox{J}_{e,R}^{p} = K_{s}n_{e,N-1}^{p} - \gamma\mbox{J}_{+,R}^{n}, \label{eq:jr} \\
\mbox{G}_{R}^{p} &= - 3kD_{e}n_{e,m-1}^{n}\frac{T_{e,R}^{p} - T_{e,m-1}^{p}}{\Delta x}+\left( \frac{5}{2}k \cdot \frac{T_{e,R}^{p} + T_{e,m-1}^{p}}{2} - e\phi_{R}^{n+1} \right)\mbox{J}_{e,R}^{n}. \label{eq:gr}
\end{align}
$K_{s}, \gamma>0$ in \eqref{eq:gl} and \eqref{eq:gr} are the given electron loss coefficient and a secondary-electron emission coefficient, respectively. 

\subsection{Fluxes with the Scharfetter-Gummel (SG) scheme}
In this work, we employ the Scharfetter-Gummel (SG) scheme to reconstruct the fluxes, replacing the original convection-diffusion and the upwind terms in \eqref{eq:je12}-\eqref{eq:g12}, \eqref{eq:jl} and \eqref{eq:jr}. We primarily focus on the implicit formulation, which permits larger time steps and improves the computational efficiency. For the $i+\frac{1}{2}$-th interior interfaces with $0 \le i \le N-2$, the fluxes $\mbox{J}_e$, $\mbox{J}_+$ and $\mbox{G}$ in \eqref{eq:1}-\eqref{eq:3a} at the $(n+1)$-th time step are approximated as
\begin{align}
\mbox{J}_{e,i+\frac{1}{2}}^{n+1} &=\frac{D_{e}}{\Delta x} \cdot [B(-P_{e,i+\frac{1}{2}}^{n+1})n_{e,i}^{n+1}-B(P_{e,i+\frac{1}{2}}^{n+1})n_{e,i+1}^{n+1}], \label{eq:sgje12} \\
\mbox{J}_{+,i+\frac{1}{2}}^{n+1} &=\frac{D_{+}}{\Delta x} \cdot [B(-P_{+,i+\frac{1}{2}}^{n+1})n_{+,i}^{n+1}-B(P_{+,i+\frac{1}{2}}^{n+1})n_{+,i+1}^{n+1}], \label{eq:sgj+12} \\
\mbox{G}_{i+\frac{1}{2}}^{n+1} &=\frac{3}{2}kn_{e,i+\frac{1}{2}}^{u,n} \cdot \frac{D_{e}}{\Delta x} \cdot
\left[B\!\left(-P_{T,i+\frac{1}{2}}^{n}\right) T_{e,i}^{n+1}-B\!\left(P_{T,i+\frac12}^{n}\right) T_{e,i+1}^{n+1}\right]-e\phi_{i+\frac12}^{u,n+1}\,
\mbox{J}_{e,i+\frac12}^{n}, \label{eq:rg12}
\end{align}
Here, $B(x)$ in \eqref{eq:sgje12} and \eqref{eq:sgj+12} is the Bernoulli function with 
\begin{equation}
B(x)=\frac{x}{e^{x}-1}, \quad B(x)>0, \quad B(0)=1, \quad B'(0)=-\frac{1}{2}. \label{eq:bf}
\end{equation}
For $P_{e}$, $P_{+}$ and $P_{T}$ in \eqref{eq:sgje12}-\eqref{eq:rg12}, we have
\begin{equation}
P_{e,i+\frac{1}{2}} =\mu_{e} \cdot \frac{\phi_{i+1}-\phi_{i}}{D_{e}}, \quad P_{+,i+\frac{1}{2}} =-\mu_{+} \cdot \frac{\phi_{i+1}-\phi_{i}}{D_{+}}, \quad  P_{T,i+\frac12}=
\frac{5\mbox{J}_{e,i+\frac12} \cdot \Delta x}{3D_{e}n_{e,i+\frac12}^{u}}. \label{eq:p}
\end{equation}
For the fluxes of $n_e$ and $n_+$ on the boundary, we retain the discretizations in \eqref{eq:jl} and \eqref{eq:jr}. The fluxes of the electron energy on the boundary, $\mbox{G}_L$ and $\mbox{G}_R$, originally defined in \eqref{eq:gl} and \eqref{eq:gr}, are modified as
\begin{align}
\mbox{G}_{L}^{n+1} &=\frac{2\kappa_{e,0}^{n}}{\Delta x} \cdot \left[B\!\left(-P_{e,L}^{n}\right) T_{e,L}^{n+1}-B\!\left(P_{e,L}^{n}\right)T_{e,0}^{n+1}\right]-e\,\phi_L^{n+1}\,\mbox{J}_{e,L}^{n}, \label{eq:glr} \\
\mbox{G}_{R}^{n+1} &= \frac{2\kappa_{e,N-1}^{n}}{\Delta x} \cdot \left[B\!\left(-P_{e,R}^{n}\right) T_{e,N-1}^{n+1}-B\!\left(P_{e,R}^{n}\right) T_{e,R}^{n+1}\right]-e\,\phi_R^{n+1}\,\mbox{J}_{e,R}^{n}, \label{eq:grr}
\end{align}
For $\kappa_{e,i}$, $P_{e,L}$ and $P_{e,R}$ in \eqref{eq:glr} and \eqref{eq:grr} with $i=\{0,N-1\}$, we define
\begin{equation}
\kappa_{e,i}=\frac{3}{2}kD_{e}n_{e,i}, \quad P_{e,L}=\frac{5\mbox{J}_{e,L} \cdot \Delta x}{6D_{e}n_{e,0}}, \quad P_{e,R}=\frac{5\mbox{J}_{e,R} \cdot \Delta x}{6D_{e}n_{e,N-1}}. \label{eq:setb}
\end{equation}

\subsection{Solving the Poisson's equation with a semi-implicit scheme}
In this work, we take a SG-based semi-implicit scheme for the Poisson's equation. To compute $\phi^{n+1}$ from \eqref{eq:poisson}, the left-hand side (LHS) is approximated using a second-order central difference scheme and both sides are multiplied by $\epsilon_0$. The resulting discrete formulation is given by
\begin{equation}
\frac{\epsilon_{0} \cdot (\phi_{i+1}^{n+1}-2\phi_{i}^{n+1}+\phi_{i-1}^{n+1})}{(\Delta x)^{2}} = e \cdot (n_{e,i}^{n+1}-n_{+,i}^{n}), \label{eq:poissoni}
\end{equation}
with the right-hand side (RHS) of \eqref{eq:poissoni} treated semi-implicitly when $1 \le i \le N-2$.
The semi-implicit treatment accounts for the much higher mobility of the electrons compared with the ions. Solving \eqref{eq:poissoni} for $\phi^{n+1}$ then reduces to a linear system
\begin{equation}
\textbf{A}^{n}\phi^{n+1}=\textbf{b}^{n}. \label{eq:ax=b} 
\end{equation}
In the following, we formulate the semi-implicit SG-based scheme for the Poisson's equation using the fluxes and boundary conditions defined before, yielding $\mathbf{A}^{n}\in\mathbb{R}^{N\times N}$ and $\mathbf{b}^{n}\in\mathbb{R}^{N}$ in \eqref{eq:ax=b}.

\subsubsection{Interior cells}
For the interior cells of \eqref{eq:poissoni} with $1 \le i \le N-2$, we can calculate $\frac{\partial \mbox{J}_{e,i+\frac{1}{2}}}{\partial (\phi_{i+1}-\phi_{i})}$ with \eqref{eq:sgje12} as
\begin{align}
\frac{\partial \mbox{J}_{e,i+\frac{1}{2}}}{\partial (\phi_{i+1}-\phi_{i})} &=-\frac{\mu_{e}}{\Delta x} \cdot [n_{e,i}B'(-P_{e,i+\frac{1}{2}})+n_{e,i+1}B'(P_{e,i+\frac{1}{2}})] =\frac{A_{e,i+\frac{1}{2}}}{\Delta x}, \label{eq:conduct} \\
A_{e,i+\frac{1}{2}} &=-\mu_{e} \cdot [n_{e,i}B'(-P_{e,i+\frac{1}{2}})+n_{e,i+1}B'(P_{e,i+\frac{1}{2}}),. \label{eq:aei12}
\end{align}
with $P_{e,i+\frac{1}{2}}$ defined in \eqref{eq:p}. When $P_{e,i+\frac{1}{2}} \to 0$, with $B'(0)=-\frac{1}{2}$ in \eqref{eq:bf}, we can get
\begin{equation}
A_{e,i+\frac{1}{2}}=\mu_{e} \cdot \frac{n_{e,i}+n_{e,i+1}}{2}. \label{eq:limit}
\end{equation}

For the semi-implicit scheme of the Poisson's equation to compute $\phi^{n+1}$, with the Taylor's formula and \eqref{eq:conduct}, $\mbox{J}_{e,i+\frac{1}{2}}^{n+1}$ can be approximated with $\mbox{J}_{e,i+\frac{1}{2}}^{n}$ as
\begin{align} 
\begin{split} \label{eq:approxj}
\mbox{J}_{e,i+\frac{1}{2}}^{n+1} &=\mbox{J}_{e,i+\frac{1}{2}}^{n}+\frac{\partial \mbox{J}_{e,i+\frac{1}{2}}^{n}}{\partial (\phi_{i+1}^{n}-\phi_{i}^{n})} \cdot [(\phi_{i+1}^{n+1}-\phi_{i}^{n+1})-(\phi_{i+1}^{n}-\phi_{i}^{n})] \\ &=\mbox{J}_{e,i+\frac{1}{2}}^{n} +\frac{A_{e,i+\frac{1}{2}}^{n}}{\Delta x} \cdot [(\phi_{i+1}^{n+1}-\phi_{i}^{n+1})-(\phi_{i+1}^{n}-\phi_{i}^{n})] \\ &=\mbox{r}_{e,i+\frac{1}{2}}^{n}+\frac{A_{e,i+\frac{1}{2}}^{n}}{\Delta x} \cdot (\phi_{i+1}^{n+1}-\phi_{i}^{n+1}), 
\end{split} \\
\mbox{r}_{e,i+\frac{1}{2}}^{n} &=\mbox{J}_{e,i+\frac{1}{2}}^{n}-\frac{A_{e,i+\frac{1}{2}}^{n}}{\Delta x} \cdot (\phi_{i+1}^{n}-\phi_{i}^{n}). \label{eq:residual}
\end{align}
In \eqref{eq:tuie}, when $p=n+1$, we can compute $n_{e,i}^{n+1}$ with $\mbox{J}_{e,i+\frac{1}{2}}^{n+1}$ and $\mbox{J}_{e,i-\frac{1}{2}}^{n+1}$. Therefore, with \eqref{eq:tuie}, \eqref{eq:approxj} and \eqref{eq:residual}, we can approximate the RHS of \eqref{eq:poissoni} as
\begin{equation} \label{eq:simplifysg}
\begin{split}
n_{e,i}^{n+1}-n_{+,i}^{n+1} &= n_{e,i}^{n}-n_{+,i}^{n}-\frac{\Delta t}{\Delta x} \cdot (\mbox{J}_{e,i+\frac{1}{2}}^{n+1}-\mbox{J}_{e,i-\frac{1}{2}}^{n+1})+\Delta tS_{e,i}^{n} \\ &= n_{e,i}^{n}-n_{+,i}^{n}-\frac{\Delta t}{\Delta x} \cdot (\mbox{r}_{e,i+\frac{1}{2}}^{n}+\frac{A_{e,i+\frac{1}{2}}^{n}}{\Delta x} \cdot (\phi_{i+1}^{n+1}-\phi_{i}^{n+1})) \\ &+\frac{\Delta t}{\Delta x} \cdot (\mbox{r}_{e,i-\frac{1}{2}}^{n}+\frac{A_{e,i+\frac{1}{2}}^{n}}{\Delta x} \cdot (\phi_{i}^{n+1}-\phi_{i-1}^{n+1}))+\Delta tS_{e,i}^{n}. 
\end{split}
\end{equation}
We put \eqref{eq:simplifysg} into \eqref{eq:poissoni} and after simplification, it follows that 
\begin{equation} \label{eq:constructsg}
\begin{split}
\frac{\epsilon_{0}+e\Delta t\cdot A_{e,i-\frac{1}{2}}^{n}}{(\Delta x)^{2}} \cdot \phi_{i-1}^{n+1} &-\frac{2\epsilon_{0}+e\Delta t\cdot (A_{e,i-\frac{1}{2}}^{n}+A_{e,i+\frac{1}{2}}^{n})}{(\Delta x)^{2}} \cdot \phi_{i}^{n+1}+\frac{\epsilon_{0}+e\Delta t \cdot A_{e,i+\frac{1}{2}}^{n}}{(\Delta x)^{2}} \cdot \phi_{i+1}^{n+1} \\ &=e(n_{e,i}^{n} - n_{+,i}^{n})-e\cdot \frac{\Delta t}{\Delta x} \cdot (\mbox{r}_{e,i+\frac{1}{2}}^{n}-\mbox{r}_{e,i-\frac{1}{2}}^{n})+e\Delta tS_{e,i}^{n}. 
\end{split}
\end{equation}
Therefore, based on \eqref{eq:constructsg}, for the $l$-th row of $\textbf{A}^{n}$ and $\textbf{b}^{n}$ in \eqref{eq:ax=b} with $2 \le l \le N-1$, we can get
\begin{equation} \label{eq:iitemsg}
\begin{split}
a_{l-1,l}^{n} &=\frac{\epsilon_{0}+e\Delta t\cdot A_{e,l-\frac{3}{2}}^{n}}{(\Delta x)^{2}}, \quad a_{l,l}^{n}=-\frac{2\epsilon_{0}+e\Delta t\cdot (A_{e,l-\frac{3}{2}}^{n}+A_{e,l-\frac{1}{2}}^{n})}{(\Delta x)^{2}}, \\ a_{l,l+1}^{n} &=\frac{\epsilon_{0}+e\Delta t\cdot A_{e,l-\frac{1}{2}}^{n}}{(\Delta x)^{2}}, \quad b_{l}^{n}=e(n_{e,l-1}^{n} - n_{+,l-1}^{n})-e\cdot \frac{\Delta t}{\Delta x} \cdot (\mbox{r}_{e,l-\frac{1}{2}}^{n}-\mbox{r}_{e,l-\frac{3}{2}}^{n})+e\Delta tS_{e,i}^{n}.
\end{split}
\end{equation}

\subsubsection{Boundary-adjacent cells} 
Having treated the interior cells, we now consider the boundary-adjacent cells with $i=0$ and $i=N-1$.

\textbf{i=0} 

When $i=0$, similar to \eqref{eq:simplifysg}, with \eqref{eq:tuie}, \eqref{eq:approxj} and \eqref{eq:residual}, we can have
\begin{equation}
\begin{split}
n_{e,0}^{n + 1}-n_{+,0}^{n + 1} &= n_{e,0}^{n}-n_{+,0}^{n}-\frac{\Delta t}{\Delta x} \cdot (\mbox{J}_{e,\frac{1}{2}}^{n+1}-\mbox{J}_{e,L}^{n})+\Delta tS_{e,0}^{n} \\ &= n_{e,0}^{n}-n_{+,0}^{n}-\frac{\Delta t}{\Delta x} \cdot (\mbox{r}_{e,\frac{1}{2}}^{n}+\frac{A_{e,\frac{1}{2}}^{n}}{\Delta x} \cdot (\phi_{1}^{n+1}-\phi_{0}^{n+1})-\mbox{J}_{e,L}^{n})+\Delta tS_{e,0}^{n}. \label{eq:simplifysg1}
\end{split}
\end{equation}
Here, $\mbox{J}_{e,L}$ is defined in \eqref{eq:jl}. With \eqref{eq:simplifysg1}, we discretize the LHS of \eqref{eq:poisson} at the left boundary using $\phi_L$ into
\begin{equation} \label{eq:expand2}
\frac{\frac{\phi_{1}^{n+1}-\phi_{0}^{n+1}}{\Delta x}-\frac{\phi_{0}^{n+1}-\phi_{L}^{n+1}}{\frac{\Delta x}{2}}}{\Delta x} = e \cdot [n_{e,0}^{n}-n_{+,0}^{n}-\frac{\Delta t}{\Delta x} \cdot (\mbox{r}_{e,\frac{1}{2}}^{n}+\frac{A_{e,\frac{1}{2}}^{n}}{\Delta x} \cdot (\phi_{1}^{n+1}-\phi_{0}^{n+1})-\mbox{J}_{e,L}^{n})+\Delta tS_{e,0}^{n}].
\end{equation}
$\phi_{L}$ in \eqref{eq:expand2} is defined in \eqref{eq:bb}. We simplify \eqref{eq:expand2} into
\begin{equation} \label{eq:ax=b2a}
\frac{-3\epsilon_{0}-e\Delta t \cdot A_{e,\frac{1}{2}}^{n}}{(\Delta x)^{2}} \cdot \phi_{0}^{n+1}+\frac{\epsilon_{0}+e\Delta t \cdot A_{e,\frac{1}{2}}^{n}}{(\Delta x)^{2}} \cdot \phi_{1}^{n+1}=b_{1}^{n}.
\end{equation}
Therefore, by applying \eqref{eq:ax=b2a}, the first row of $\mathbf{A}^{n}$ and the corresponding entry of $\mathbf{b}^{n}$ in \eqref{eq:ax=b} are given by
\begin{equation} \label{eq:ax=b1a}
\begin{split}
a_{1,1}^{n} &=\frac{-3\epsilon_{0}-e\Delta t \cdot A_{e,\frac{1}{2}}^{n}}{(\Delta x)^{2}}, \quad a_{1,2}^{n}=\frac{\epsilon_{0}+e\Delta t \cdot A_{e,\frac{1}{2}}^{n}}{(\Delta x)^{2}}, \\ b_{1}^{n} &=e \cdot [n_{e,0}^{n}-n_{+,0}^{n}-\frac{\Delta t}{\Delta x} \cdot (\mbox{r}_{e,\frac{1}{2}}^{n}-\mbox{J}_{e,L}^{n})+\Delta tS_{e,0}^{n}]-\frac{2\epsilon_{0}\phi_{L}^{n}}{(\Delta x)^{2}}.
\end{split}
\end{equation}

\textbf{i=N-1} 

When $i=N-1$, similar to \eqref{eq:simplifysg1}, with \eqref{eq:tuie}, \eqref{eq:approxj} and \eqref{eq:residual}, we can have
\begin{equation}
\begin{split}
&n_{e,N-1}^{n + 1} -n_{+,N-1}^{n + 1} \\ 
= \, &n_{e,N-1}^{n} -n_{+,N-1}^{n}-\frac{\Delta t}{\Delta x} \cdot (\mbox{J}_{e,R}^{n}-\mbox{J}_{e,N-\frac{3}{2}}^{n})+\Delta tS_{e,N-1}^{n} \\ 
= \, &\underbrace{n_{e,N-1}^{n} -n_{+,N-1}^{n} -\frac{\Delta t}{\Delta x} \cdot (\mbox{J}_{e,R}^{n}-\mbox{r}_{e,N-\frac{3}{2}}^{n}-\frac{A_{e,N-\frac{3}{2}}^{n}}{\Delta x} \cdot (\phi_{N-1}^{n+1}-\phi_{N-2}^{n+1}))+\Delta tS_{e,N-1}^{n}}_{R_{b}}. \label{eq:simplify3}
\end{split}
\end{equation}
Here, $\mbox{J}_{e,R}$ is defined in \eqref{eq:jr}. Similar to \eqref{eq:expand2}, we discretize the LHS of \eqref{eq:poisson} with $\phi_{R}$ on the right boundary and with \eqref{eq:simplify3}, we have

Here, $\mbox{J}_{e,R}$ is defined in \eqref{eq:jr}. With \eqref{eq:simplify3}, we discretize the LHS of \eqref{eq:poisson} at the right boundary using $\phi_R$ into
\begin{equation} \label{eq:expand3}
\frac{\frac{\phi_{R}^{n}-\phi_{N-1}^{n}}{\frac{\Delta x}{2}}-\frac{\phi_{N-1}^{n}-\phi_{N-2}^{n}}{\Delta x}}{\Delta x} = e \cdot R_{b}.
\end{equation}
$\phi_{R}$ in \eqref{eq:expand3} is defined in \eqref{eq:bb}. We simplify \eqref{eq:expand3} into
\begin{equation} \label{eq:ax=b4}
\frac{-3\epsilon_{0}-e\Delta t \cdot A_{e,N-\frac{3}{2}}^{n}}{(\Delta x)^{2}} \cdot \phi_{N-1}^{n+1}+\frac{\epsilon_{0}+e\Delta t \cdot A_{e,N-\frac{3}{2}}^{n}}{(\Delta x)^{2}} \cdot \phi_{N-2}^{n+1}=b_{N}^{n}.
\end{equation}
Therefore, using \eqref{eq:ax=b4}, the $N$-th row of $\mathbf{A}^{n}$ and the corresponding entry of $\mathbf{b}^{n}$ in \eqref{eq:ax=b} are given by
\begin{equation} \label{eq:ax=b5}
\begin{split}
a_{N,N}^{n} &=\frac{-3\epsilon_{0}-e\Delta t \cdot A_{e,N-\frac{3}{2}}^{n}}{(\Delta x)^{2}}, \quad a_{N,N-1}^{n}=\frac{\epsilon_{0}+e\Delta t \cdot A_{e,N-\frac{3}{2}}^{n}}{(\Delta x)^{2}}, \\ b_{N}^{n} &= e \cdot [n_{e,N-1}^{n}-n_{+,N-1}^{n}-\frac{\Delta t}{\Delta x} \cdot (\mbox{J}_{e,R}^{n}-\mbox{r}_{e,N-\frac{3}{2}}^{n})+\Delta tS_{e,N-1}^{n}]-\frac{2\epsilon_{0}\phi_{R}^{n}}{(\Delta x)^{2}}.
\end{split}
\end{equation}

\subsubsection{Summary of $\textbf{A}^{n}\phi^{n+1}=\textbf{b}^{n}$} 
In summary, the linear system \eqref{eq:ax=b} for computing $\phi^{n+1}$ has $\mathbf{A}^{n} \in \mathbb{R}^{N\times N}$ and $\mathbf{b}^{n} \in \mathbb{R}^{N}$, with entries specified by \eqref{eq:iitemsg}, \eqref{eq:ax=b1a} and \eqref{eq:ax=b5}, where $A_e$ is defined in \eqref{eq:limit}. Moreover, $\mathbf{A}^{n}$ is a symmetric tridiagonal matrix.

%\begin{equation} \label{eq:summary3}
%a_{l,l-1}^{n} =\frac{\epsilon_{0}+e\Delta t\cdot A_{e,l-\frac{3}{2}}^{n}}{(\Delta x)^{2}}, \quad a_{l-1,l}^{n}=\frac{\epsilon_{0}+e\Delta t\cdot A_{e,l-\frac{3}{2}}^{n}}{(\Delta x)^{2}}, \quad a_{l-1, l-1}^{n} = \left\{
%\begin{array}{ll}
%\frac{-3\epsilon_{0}-e\Delta t \cdot A_{e,\frac{1}{2}}^{n}}{(\Delta x)^{2}}, & l=2 \\
%-\frac{2\epsilon_{0}+e\Delta t\cdot (A_{e,l-\frac{5}{2}}^{n}+A_{e,l-\frac{3}{2}}^{n})}{(\Delta x)^{2}}, & 3 \le l \le N \\
%\frac{-3\epsilon_{0}-e\Delta t \cdot A_{e,N-\frac{3}{2}}^{n}}{(\Delta x)^{2}}, & l=N+1
%\end{array}, 
%\right. \\
%\end{equation}
%\begin{equation} \label{eq:summary4}
%b_{l-1}^{n} = \left\{
%\begin{array}{ll}
%e \cdot [n_{e,0}^{n}-n_{+,0}^{n}-\frac{\Delta t}{\Delta x} \cdot (\mbox{r}_{e,\frac{1}{2}}^{n}-\mbox{J}_{e,L}^{n})+\Delta t S_{e,0}^{n}]-\frac{2\epsilon_{0}\phi_{L}^{n}}{(\Delta x)^{2}}, &l=2 \\
%e \cdot (n_{e,l-2}^{n} - n_{+,l-2}^{n})-e\cdot \frac{\Delta t}{\Delta x} \cdot (\mbox{r}_{e,l-\frac{3}{2}}^{n}-\mbox{r}_{e,l-\frac{5}{2}}^{n}+\Delta tS_{e,l-2}^{n}), & 3 \le l \le N \\
%e \cdot [n_{e,N-1}^{n}-n_{+,N-1}^{n}-\frac{\Delta t}{\Delta x} \cdot (\mbox{J}_{e,R}^{n}-\mbox{r}_{e,N-\frac{3}{2}}^{n})+\Delta tS_{e,N-1}^{n}]-\frac{2\epsilon_{0}\phi_{R}^{n}}{(\Delta x)^{2}}, & l=N+1
%\end{array}.
%\right.
%\end{equation}

\subsection{Solving the continuity and electron energy equations with the implicit scheme}
In this section, we provide an SG-based implicit scheme  for solving the continuity and electron energy equations with $p=n+1$ in \eqref{eq:tuie}. According to \eqref{eq:fluxee}, the electron temperature is given by
$T_{e,i}^{n+1}=\frac{2E_{e,i}^{n+1}}{3k n_{e,i}^{n+1}}$.
To retain a linear formulation, $n_{e,i}^{n+1}$ is approximated using \eqref{eq:tuie} and \eqref{eq:definition} with $p=n$ and a sufficiently small time step $t_0$. The fluxes are discretized according to \eqref{eq:sgje12}-\eqref{eq:rg12}. Therefore, \eqref{eq:tuie} can be rewritten as
\begin{equation}
\mbox{W}_{i}^{n+1}+\frac{\Delta t}{\Delta x} \cdot (\mbox{F}_{i + \frac{1}{2}}^{n + 1} - \mbox{F}_{i - \frac{1}{2}}^{n + 1})=W_{i}^{n}-\Delta t \cdot S_{i}^{n}. \label{eq:generals}
\end{equation}
Based on \eqref{eq:generals}, we can compute $W^{n+1}$ in \eqref{eq:definition} by solving a linear system
\begin{equation}
\textbf{C}^{n}W^{n+1}=\textbf{R}^{n}, \label{eq:lsystem}
\end{equation}
with $\textbf{C}^{n}=\begin{bmatrix}
\textbf{C}_{1}^{n} & 0 & 0 \\ 
0 & \textbf{C}_{2}^{n} & 0 \\ 
0 & 0 & \textbf{C}_{3}^{n}
\end{bmatrix} \in \mathbb{R}^{3N\times 3N}$ and $\textbf{R}^{n} \in \mathbb{R}^{3N}$. In \eqref{eq:lsystem}, according to \eqref{eq:tuie} and \eqref{eq:definition}, we have
\begin{equation} \label{eq:equation}
\textbf{R}_{i}^{n}=\begin{bmatrix}
\textbf{R}_{1,i}^{n+1} \\
\textbf{R}_{2,i}^{n+1} \\
\textbf{R}_{3,i}^{n+1} \\
\end{bmatrix}=\begin{bmatrix}
n_{e,i}^{n}+\Delta t \cdot S_{e,i}^{n}+\frac{\Delta t}{\Delta x} \cdot R_{1,i}^{n} \\
n_{+,i}^{n}+\Delta t \cdot S_{+,i}^{n}+\frac{\Delta t}{\Delta x} \cdot R_{2,i}^{n} \\
\frac{2E_{e,i}^{n}}{3kn_{e,i}^{n+1}}-\Delta t \cdot \frac{2}{3kn_{e,i}^{n+1}} \cdot H_{r}S_{e,i}^{n}+\frac{\Delta t}{\Delta x} \cdot \frac{2}{3kn_{e,i}^{n+1}} \cdot R_{3,i}^{n} \\
\end{bmatrix}.
\end{equation}
Here, $\textbf{C}_1$, $\textbf{C}_2$ and $\textbf{C}_3 \in \mathbb{R}^{N\times N}$ are block matrices associated with $n_e$, $n_+$ and $T_e$, respectively, while $R_1$, $R_2$, and $R_3$ denote the corresponding contributions of $\mbox{F}$ to the vector on the RHS, $\textbf{R}$. In the following, we construct $\textbf{C}^n$ and $\textbf{R}^n$ for both the interior and the boundary-adjacent cells.

\subsubsection{Interior cells}
Firstly, we construct $\mathbf{C}^{n}$ and $\mathbf{R}^{n}$ for the interior cells, \textit{i.e.}, $1 \le i \le N-2$ in \eqref{eq:generals}. Using \eqref{eq:definition}, \eqref{eq:sgje12}-\eqref{eq:rg12} and \eqref{eq:generals}, the $l$-th row with $2 \le l \le N-1$ of each block matrix $\mathbf{C}_1^n$, $\mathbf{C}_2^n$ and $\mathbf{C}_3^n$ has the nonzero entries
$c_{l,l-1}^{n}
=-\frac{\Delta t}{\Delta x} \cdot 
\frac{\partial \mbox{F}_{l-\frac{3}{2}}^{n}}{\partial W_{l-2}^{n}}$, 
$
c_{l,l}^{n}
=1+\frac{\Delta t}{\Delta x} \cdot 
\left(
\frac{\partial \mbox{F}_{l-\frac{1}{2}}^{n+1}}{\partial W_{l-1}^{n}}
-\frac{\partial \mbox{F}_{l-\frac{3}{2}}^{n+1}}{\partial W_{l-1}^{n}}
\right)$
and 
$c_{l,l+1}^{n}
=\frac{\Delta t}{\Delta x} \cdot 
\frac{\partial \mbox{F}_{l-\frac{1}{2}}^{n}}{\partial W_{l}^{n}}$
in the form of
\begin{equation} \label{eq:jacobil}
c_{l,l-1}^{n}=-\frac{\Delta t}{\Delta x} \cdot \begin{bmatrix}
\frac{D_{e}}{\Delta x} \cdot B(-P_{e,l-\frac{3}{2}}^{n+1}) \\
\frac{D_{+}}{\Delta x} \cdot B(-P_{+,l-\frac{3}{2}}^{n+1}) \\
\frac{2}{3kn_{e,l-1}^{n+1}} \cdot (\frac{3}{2}kn_{e,l-\frac{3}{2}}^{u,n} \cdot \frac{D_{e}}{\Delta x} \cdot B(-P_{T,l-\frac{3}{2}}^{n}))
\end{bmatrix},
\end{equation}
\begin{equation} \label{eq:jacobim} 
c_{l,l}^{n}=1+\frac{\Delta t}{\Delta x} \cdot \begin{bmatrix}
\frac{D_{e}}{\Delta x} \cdot (B(-P_{e,l-\frac{1}{2}}^{n+1})+B(P_{e,l-\frac{3}{2}}^{n+1})) \\
\frac{D_{+}}{\Delta x} \cdot (B(-P_{+,l-\frac{1}{2}}^{n+1})+B(P_{+,i-\frac{3}{2}}^{n+1})) \\
\frac{2}{3kn_{e,l-1}^{n+1}} \cdot (\frac{3}{2}kn_{e,l-\frac{1}{2}}^{u,n} \cdot \frac{D_{e}}{\Delta x} \cdot B(-P_{T,l-\frac{1}{2}}^{n})+\frac{3}{2}kn_{e,l-\frac{3}{2}}^{u,n} \cdot \frac{D_{e}}{\Delta x} \cdot B(P_{T,l-\frac{3}{2}}^{u,n}))
\end{bmatrix},
\end{equation}
\begin{equation} \label{eq:jacobir}
c_{l,l+1}^{n}=\frac{\Delta t}{\Delta x} \cdot \begin{bmatrix}
-\frac{D_{e}}{\Delta x} \cdot B(P_{e,l-\frac{1}{2}}^{n+1}) \\
-\frac{D_{+}}{\Delta x} \cdot B(P_{+,l-\frac{1}{2}}^{n+1}) \\
\frac{2}{3kn_{e,l-1}^{n+1}} \cdot (-\frac{3}{2}kn_{e,l-\frac{1}{2}}^{u,n} \cdot \frac{D_{e}}{\Delta x} \cdot B(P_{T,l-\frac{1}{2}}^{n}))
\end{bmatrix}. 
\end{equation}
\eqref{eq:jacobil}-\eqref{eq:jacobir} for $n_e^{n+1}$, $n_+^{n}$ and $T_e^{n}$ are used to construct $\mathbf{C}_1^n$, $\mathbf{C}_2^n$ and $\mathbf{C}_3^n$, respectively. The vector on the RHS in \eqref{eq:lsystem}, $\mathbf{R}^n$, follows from \eqref{eq:equation} as
\begin{align} 
\textbf{R}_{l}^{n} &=\begin{bmatrix}
\textbf{R}_{1,l}^{n+1} \\
\textbf{R}_{2,l}^{n+1} \\
\textbf{R}_{3,l}^{n+1} \\
\end{bmatrix} =\begin{bmatrix}
n_{e,l-1}^{n}+\Delta t \cdot S_{e,l-1}^{n} \\
n_{+,l-1}^{n}+\Delta t \cdot S_{+,l-1}^{n} \\
K_{0} \\
\end{bmatrix}, \label{eq:rl} \\
\begin{split} \label{eq:k0}
K_{0} &=T_{e,l-1}^{n}-\Delta t \cdot \frac{2}{3kn_{e,l-1}^{n+1}} \cdot H_{r}S_{e,l-1}^{n}-\Delta t \cdot \frac{2}{3kn_{e,l-1}^{n+1}} \cdot e\phi_{l-1}^{n+1} \cdot \frac{\mbox{J}_{e,l-\frac{1}{2}}^{n}-\mbox{J}_{e,l-\frac{3}{2}}^{n}}{\Delta x}  \\ &+\frac{\Delta t}{\Delta x} \cdot \frac{2}{3kn_{e,l-1}^{n+1}} \cdot (e\phi_{l-\frac{1}{2}}^{u,n+1} \cdot \mbox{J}_{e,l-\frac{1}{2}}^{n}-e\phi_{l-\frac{3}{2}}^{u,n+1} \cdot \mbox{J}_{e,l-\frac{3}{2}}^{n}). 
\end{split}
\end{align}

\subsubsection{Boundary-adjacent cells}
Here, we consider the boundary-adjacent cells, corresponding to $i=0$ and $i=N-1$ in \eqref{eq:generals}, which are treated separately.

\textbf{i=0} 

We take $i=0$ in \eqref{eq:generals} as
\begin{equation} \label{eq:generall}
\mbox{W}_{0}^{n+1}+\frac{\Delta t}{\Delta x} \cdot (\mbox{F}_{\frac{1}{2}}^{n + 1} - \mbox{F}_{L}^{n + 1})=W_{0}^{n}-\Delta t \cdot S_{0}^{n}. 
\end{equation}
Because $n_e^{n+1}$ and $n_+^{n+1}$ are solved simultaneously after obtaining $\phi^{n+1}$, using \eqref{eq:jl} and \eqref{eq:glr}, the flux on the left boundary in \eqref{eq:generall} is
$\mbox{F}_{L}^{n+1}
=
\begin{bmatrix}
\mbox{J}_{e,L}^{n+1} \\
\mbox{J}_{+,L}^{n+1} \\
\mbox{G}_{L}^{n+1}
+e\phi_{0}^{n+1}\mathrm{J}_{e,L}^{n+1}
\end{bmatrix}.$
Here, $\mbox{J}_{e,L}^{n+1}$ depends on $n_{e,0}^{n+1}$, $n_{+,0}^{n}$, $\phi_{0}^{n+1}$ and $\phi_{L}^{n+1}$. $\mbox{J}_{+,L}^{n+1}$ depends on $n_{+,0}^{n+1}$, $\phi_{0}^{n+1}$, and $\phi_{L}^{n+1}$. $\mbox{G}_{L}^{n+1}+e\phi_{0}^{n+1}\mbox{J}_{e,L}^{n+1}$ depends on $\kappa_{e,0}^{n}$, $P_{e,L}^{n}$, $T_{e,L}^{n+1}$, $T_{e,0}^{n+1}$, $\phi_{L}^{n+1}$, $\phi_{0}^{n+1}$ and $\mbox{J}_{e,L}^{n+1}$.
%\begin{equation} \label{eq:fl}
%\mbox{F}_{L}^{n+1} =\begin{bmatrix}
%\left\{
%\begin{array}{ll}
%-K_{s}n_{e,0}^{n+1}+2\gamma\mu_{+}n_{+,0}^{n} \cdot \frac{\phi_{0}^{n+1} - \phi_{L}^{n+1}}{\Delta x}, & \phi_{0}^{n+1} \ge \phi_{L}^{n+1} \\
%-K_{s}n_{e,0}^{n+1}, & \phi_{0}^{n+1}<\phi_{L}^{n+1}
%\end{array},
%\right. \\
%\left\{
%\begin{array}{ll}
%-2\mu_{+}n_{+,0}^{n+1} \cdot \frac{\phi_{0}^{n+1} - \phi_{L}^{n+1}}{\Delta x}, & \phi_{0}^{n+1} \ge \phi_{L}^{n+1} \\
%0, & \phi_{0}^{n+1}<\phi_{L}^{n+1}
%\end{array},
%\right. \\
%\frac{2\kappa_{e,0}^{n}}{\Delta x} \cdot \left[B\!\left(-P_{e,L}^{n}\right) T_{e,L}^{n+1}-B\!\left(P_{e,L}^{n}\right)T_{e,0}^{n+1}\right]-e\,\phi_L^{n+1}\,\mbox{J}_{e,L}^{n}+e\phi_{0}^{n+1}\mbox{J}_{e,L}^{n} \\
%\end{bmatrix}.
%\end{equation}
In this way, for $i=0$, \eqref{eq:definition}, \eqref{eq:sgje12}-\eqref{eq:rg12} and \eqref{eq:generall} yield the following nonzero entries in the first row of $\mathbf{C}_1^n$, $\mathbf{C}_2^n$, and $\mathbf{C}_3^n$: $c_{1,1}^{n}=
1+\frac{\Delta t}{\Delta x} \cdot 
\left(
\frac{\partial \mathrm{F}_{\frac{1}{2}}^{n+1}}{\partial W_{0}^{n+1}}
-
\frac{\partial \mathrm{F}_{L}^{n+1}}{\partial W_{0}^{n+1}}
\right)$ and 
$c_{1,2}^{n}
=
\frac{\Delta t}{\Delta x} \cdot 
\frac{\partial \mathrm{F}_{\frac{1}{2}}^{n+1}}{\partial W_{1}^{n+1}}$
where
\begin{equation} \label{eq:c11} 
c_{1,1}^{n} =1+\frac{\Delta t}{\Delta x} \cdot \begin{bmatrix}
\frac{D_{e}}{\Delta x} \cdot B(-P_{e,\frac{1}{2}}^{n+1})+K_{s} \\
\frac{D_{+}}{\Delta x} \cdot B(-P_{+,\frac{1}{2}}^{n+1})+\frac{\partial \mbox{J}_{+,L}^{n+1}}{\partial n_{+,0}^{n+1}} \\
\frac{2}{3kn_{e,0}^{n+1}} \cdot (\frac{3}{2}kn_{e,\frac{1}{2}}^{u,n} \cdot \frac{D_{e}}{\Delta x} \cdot B(-P_{T,\frac{1}{2}}^{n})+\frac{2\kappa_{e,0}^{n}}{\Delta x} \cdot B(P_{e,L}^{n}))
\end{bmatrix}, 
\end{equation}
\begin{equation} \label{eq:c12}
c_{1,2}^{n} =\frac{\Delta t}{\Delta x} \cdot \begin{bmatrix}
-\frac{D_{e}}{\Delta x} \cdot B(P_{e,\frac{1}{2}}^{n+1}) \\
-\frac{D_{+}}{\Delta x} \cdot B(P_{+,\frac{1}{2}}^{n+1}) \\
\frac{2}{3kn_{e,0}^{n+1}} \cdot (-\frac{3}{2}kn_{e,\frac{1}{2}}^{u,n} \cdot \frac{D_{e}}{\Delta x} \cdot B(P_{T,\frac{1}{2}}^{n}))
\end{bmatrix}. 
\end{equation}
Accordingly, the vector on the RHS of \eqref{eq:lsystem}, $\mathbf{R}^n$, follows from  \eqref{eq:equation} as
\begin{equation} \label{eq:rhs1} 
\textbf{R}_{1}^{n} =\begin{bmatrix}
\textbf{R}_{1,1}^{n+1} \\
\textbf{R}_{2,1}^{n+1} \\
\textbf{R}_{3,1}^{n+1} \\
\end{bmatrix}=\begin{bmatrix}
n_{e,0}^{n}+\Delta t \cdot S_{e,0}^{n}-\frac{\Delta t}{\Delta x} \cdot \gamma\mbox{J}_{+,L}^{n} \\
n_{+,0}^{n}+\Delta t \cdot S_{+,0}^{n} \\
K_{1} 
\end{bmatrix}, \\
\end{equation}
\begin{equation} \label{eq:k1}
\begin{split}
K_{1} &=T_{e,0}^{n}-\Delta t \cdot \frac{2}{3kn_{e,0}^{n+1}} \cdot H_{r}S_{e,0}^{n} +\Delta t \cdot \frac{2}{3kn_{e,0}^{n+1}} \cdot e\phi_{0}^{n+1} \cdot \frac{\mbox{J}_{e,L}^{n}-\mbox{J}_{e,\frac{1}{2}}^{n}}{\Delta x} \\ &+\frac{\Delta t}{\Delta x} \cdot \frac{2}{3kn_{e,0}^{n+1}} \cdot (e\phi_{L}^{n+1}\mbox{J}_{e,L}^{n}-\frac{2\kappa_{e,0}^{n}}{\Delta x} \cdot B(-P_{e,L}^{n})T_{e,L}^{n+1}-e\phi_{\frac12}^{u,n+1}\mbox{J}_{e,\frac{1}{2}}^{n}).
\end{split}
\end{equation}

\textbf{i=N-1} 

We take $i=N-1$ in \eqref{eq:generals} as
\begin{equation} \label{eq:generalr}
W_{N-1}^{n + 1} + \frac{\Delta t}{\Delta x} \cdot (\mbox{F}_{R}^{n + 1} - \mbox{F}_{N-\frac{3}{2}}^{n + 1})=W_{N-1}^{n}-\Delta t \cdot S_{N-1}^{n + 1}. 
\end{equation}
Analogous to the case when $i=0$, for $i=N-1$ with \eqref{eq:generalr}, based on \eqref{eq:jr} and \eqref{eq:grr}, we have
$\mbox{F}_{R}^{n+1}
=
\begin{bmatrix}
\mbox{J}_{e,R}^{n+1} \\
\mbox{J}_{+,R}^{n+1} \\
\mbox{G}_{R}^{n+1}
+e\phi_{N-1}^{n+1}\mbox{J}_{e,R}^{n+1}
\end{bmatrix}$.
Specifically, $\mbox{J}_{e,R}^{n+1}$ depends on $n_{e,N-1}^{n+1}$, $n_{+,N-1}^{n}$, $\phi_{R}^{n+1}$ and $\phi_{N-1}^{n+1}$, whereas $\mbox{J}_{+,R}^{n+1}$ depends on $n_{+,N-1}^{n+1}$, $\phi_{R}^{n+1}$ and $\phi_{N-1}^{n+1}$. Moreover, $\mbox{G}_{R}^{n+1}+e\phi_{N-1}^{n+1}\mbox{J}_{e,R}^{n+1}$ depends on $\kappa_{e,N-1}^{n}$, $P_{e,R}^{n}$, $T_{e,N-1}^{n+1}$, $T_{e,R}^{n+1}$, $\phi_{R}^{n+1}$, $\phi_{N-1}^{n+1}$ and $\mbox{J}_{e,R}^{n+1}$.
%\begin{equation} \label{eq:fr}
%\mbox{F}_{R}^{n+1} =\begin{bmatrix}
%\left\{
%\begin{array}{ll}
%K_{s}n_{e,N-1}^{n+1}+2\gamma\mu_{+}n_{+,N-1}^{n} \cdot \frac{\phi_{R}^{n+1} - \phi_{N-1}^{n+1}}{\Delta x}, & \phi_{R}^{n+1} \le \phi_{N-1}^{n+1} \\
%K_{s}n_{e,N-1}^{n+1}, & \phi_{R}^{n+1}>\phi_{N-1}^{n+1}
%\end{array},
%\right. \\
%\left\{
%\begin{array}{ll}
%-2\mu_{+}n_{+,N-1}^{n+1} \cdot \frac{\phi_{R}^{n+1} - \phi_{N-1}^{n+1}}{\Delta x}, & \phi_{R}^{n+1} \le \phi_{N-1}^{n+1} \\
%0, & \phi_{R}^{n+1}>\phi_{N-1}^{n+1}
%\end{array},
%\right. \\
%\frac{2\kappa_{e,N-1}^{n}}{\Delta x} \cdot %\left[B\!\left(-P_{e,R}^{n}\right) T_{e,N-1}^{n+1}-%B\!\left(P_{e,R}^{n}\right) T_{e,R}^{n+1}\right]-%e\,\phi_R^{n+1}\,\mbox{J}_{e,R}^{n}+e\phi_{N-%1}^{n+1}\mbox{J}_{e,R}^{n} \\
%\end{bmatrix}.
%\end{equation}
Therefore, for $i=N-1$, \eqref{eq:definition}, \eqref{eq:sgje12}-\eqref{eq:rg12} and \eqref{eq:generalr} yield the following nonzero entries in the $N$-th row of $\mathbf{C}_1^n$, $\mathbf{C}_2^n$, and $\mathbf{C}_3^n$:
$c_{N,N}^{n}=
1+\frac{\Delta t}{\Delta x} \cdot 
\left(
\frac{\partial \mathrm{F}_{R}^{n+1}}{\partial W_{N-1}^{n+1}}-
\frac{\partial \mathrm{F}_{N-\frac{3}{2}}^{n+1}}{\partial W_{N-1}^{n+1}}
\right)$ and $c_{N,N-1}^{n}=
\frac{\Delta t}{\Delta x} \cdot 
\frac{\partial \mathrm{F}_{N-\frac{3}{2}}^{n+1}}{\partial W_{N-2}^{n+1}}$
where
\begin{equation} \label{eq:c1n}
c_{N,N}^{n} =1+\frac{\Delta t}{\Delta x} \cdot \begin{bmatrix}
K_{s}+\frac{D_{e}}{\Delta x} \cdot B(P_{e,N-\frac{3}{2}}^{n+1}) \\
\frac{\partial \mbox{J}_{+,R}^{n+1}}{\partial n_{+,N-1}^{n+1}}+\frac{D_{+}}{\Delta x} \cdot B(P_{+,N-\frac{3}{2}}^{n+1}) \\
\frac{2}{3kn_{e,N-1}^{n+1}} \cdot (\frac{2\kappa_{e,N-1}^{n}}{\Delta x} \cdot B(-P_{e,R}^{n})+\frac{3}{2}kn_{e,N-\frac{3}{2}}^{u,n} \cdot \frac{D_{e}}{\Delta x} \cdot B(P_{T,N-\frac{3}{2}}^{n}))
\end{bmatrix}, \\
\end{equation}
\begin{equation} \label{eq:c2n}
c_{N,N-1}^{n} =\frac{\Delta t}{\Delta x} \cdot \begin{bmatrix}
-\frac{D_{e}}{\Delta x} \cdot B(-P_{e,N-\frac{3}{2}}^{n+1}) \\
-\frac{D_{+}}{\Delta x} \cdot B(-P_{+,N-\frac{3}{2}}^{n+1}) \\
\frac{2}{3kn_{e,N-1}^{n+1}} \cdot (-\frac{3}{2}kn_{e,N-\frac{3}{2}}^{u,n} \cdot \frac{D_{e}}{\Delta x} \cdot B(-P_{T,N-\frac{3}{2}}^{n}))
\end{bmatrix}. 
\end{equation}
Consequently, the vector on the RHS of \eqref{eq:lsystem}, $\textbf{R}^{n}$, follows from \eqref{eq:equation} as
\begin{align}
\begin{split} \label{eq:rhsn}
\textbf{R}_{N-1}^{n} &=\begin{bmatrix}
\textbf{R}_{1,N-1}^{n+1} \\
\textbf{R}_{2,N-1}^{n+1} \\
\textbf{R}_{3,N-1}^{n+1} \\
\end{bmatrix} =\begin{bmatrix}
n_{e,N-1}^{n}+\Delta t \cdot S_{e,N-1}^{n}+\frac{\Delta t}{\Delta x} \cdot \gamma\mbox{J}_{+,R}^{n} \\
n_{+,N-1}^{n}+\Delta t \cdot S_{+,N-1}^{n} \\
K_{2} 
\end{bmatrix},
\end{split} \\
\begin{split}
K_{2} &=T_{e,N-1}^{n} -\Delta t \cdot \frac{2}{3kn_{e,N-1}^{n+1}} \cdot H_{r}S_{e,N-1}^{n}+\Delta t \cdot \frac{2}{3kn_{e,N-1}^{n+1}} \cdot e\phi_{N-1}^{n+1} \cdot \frac{\mbox{J}_{e,N-\frac{3}{2}}^{n}-\mbox{J}_{e,R}^{n}}{\Delta x} \\ &+\frac{\Delta t}{\Delta x} \cdot \frac{2}{3kn_{e,N-1}^{n+1}} \cdot (-e\phi_{N-\frac{3}{2}}^{u,n+1}\mbox{J}_{e,N-\frac{3}{2}}^{n}+\frac{2\kappa_{e,N-1}^{n}}{\Delta x} \cdot B(P_{e,R}^{n})T_{e,R}^{n+1}+e\phi_{R}^{n+1}\mbox{J}_{e,R}^{n}). \label{eq:k2}
\end{split}
\end{align}

\subsubsection{Summary of $\mathbf{C}_i^n W^{n+1}=\mathbf{R}_i^n$}
In summary, applying \eqref{eq:equation} to the linear system \eqref{eq:lsystem} yields the coefficient matrix $\mathbf{C}^n$ and the vector on the RHS, $\mathbf{R}_i^n$. The entries of $\mathbf{C}^n$ are specified by \eqref{eq:jacobil}-\eqref{eq:jacobir}, \eqref{eq:c11}, \eqref{eq:c12}, \eqref{eq:c1n} and \eqref{eq:c2n}, while $\mathbf{R}_i^n$ is given by \eqref{eq:rl}, \eqref{eq:rhs1}, \eqref{eq:k1}, \eqref{eq:rhsn} and \eqref{eq:k2}.

\subsection{Numerical algorithm of 1D-CCP}
The numerical algorithm for the simulation of 1D-CCP is shown in \cref{alg:1D-CCP}.

\begin{algorithm}
\caption{Numerical algorithm of 1D-CCP}
\label{alg:1D-CCP}
\begin{algorithmic}[1] % 加上 [1] 可以自動顯示行號，更具可讀性
\REQUIRE Initial values $n_e^0, n_+^0, T_e^0, \phi^{0}$, steps $\Delta t, \Delta x$, $T=0$, $n=0$, $T_{\max}$.
\ENSURE Final states $n_e^T, n_+^T, T_e^T, \phi^T$.

\WHILE{$T < T_{\max}$}
    \STATE \textbf{Step 1:} Solve the Poisson's equation with \eqref{eq:ax=b} to obtain $\phi^{n+1}$.
    \STATE \textbf{Step 2:} Solve the other equations \eqref{eq:lsystem} using $\phi^{n+1}$ to obtain $n_e^{n+1}, n_+^{n+1}, T_e^{n+1}$.
    \STATE \textbf{Step 3:} Update time and indices:
    \STATE \quad $T \leftarrow T + \Delta t$
    \STATE \quad $n \leftarrow n + 1$
\ENDWHILE
\end{algorithmic}
\end{algorithm}

\section{Numerical analysis of the schemes}
\label{sec:analysis}
This section presents a detailed numerical analysis of the schemes introduced in \cref{sec:schemes}, focusing on the discrete particle balance, positivity of the solutions and the uniform bounds for the variables. A consistency analysis can be carried out by standard Taylor expansion.

\subsection{Discrete particle balance}
Firstly, we establish the discrete particle balance for the densities of the particles, $n_e$ and $n_{+}$.

\begin{theorem}[Discrete particle balance of $n_{e}$ and $n_{+}$]\label{theorem:mass}
For $n \in \{n_e,n_{+}\}$, let $n^{n+1}$ be computed by the implicit scheme in \eqref{eq:generals} with $\mbox{J}_L=\{\mbox{J}_{+,L},\mbox{J}_{e,L}\}=\mbox{J}_{-\frac{1}{2}}$ and $\mbox{J}_R=\{\mbox{J}_{+,R},\mbox{J}_{e,R}\}=\mbox{J}_{N-\frac{1}{2}}$. Then, the scheme satisfies the following discrete particle balance
\begin{equation}
\Delta x \cdot \sum_{i=0}^{N-1}n_{+}^{n+1}-\Delta x \cdot \sum_{i=0}^{N-1}n_{+}^{n}=\Delta t\Delta x \cdot \sum_{i=0}^{N-1}S_{e}^{n}-\Delta t(\mbox{J}_{R}^{n+1}-\mbox{J}_{L}^{n+1}). \label{eq:conservation}
\end{equation}
\end{theorem}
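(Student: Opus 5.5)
The plan is to sum the cellwise update \eqref{eq:generals}, restricted to the first two components of $W$ (the $n_e$ and $n_+$ rows), over all cells $i=0,\dots,N-1$, multiply by $\Delta x$, and let the interior fluxes telescope. Concretely, for $n\in\{n_e,n_+\}$ the $n$-component of \eqref{eq:generals} reads
\begin{equation*}
n_i^{n+1}-n_i^{n}+\frac{\Delta t}{\Delta x}\left(\mbox{J}_{i+\frac12}^{n+1}-\mbox{J}_{i-\frac12}^{n+1}\right)=\Delta t\, S_{e,i}^{n},
\end{equation*}
where I have used \eqref{eq:source}, i.e.\ $S_{+,i}^n=S_{e,i}^n=k_{r,i}^nN_0n_{e,i}^n$, together with the sign convention $S_i^n=(-S_{e,i}^n,-S_{+,i}^n,\dots)^T$ in \eqref{eq:definition}. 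The hypothesis of the theorem identifies the boundary fluxes with the half-index fluxes, $\mbox{J}_{-\frac12}=\mbox{J}_L$ and $\mbox{J}_{N-\frac12}=\mbox{J}_R$. This makes the cells $i=0$ and $i=N-1$ (equations \eqref{eq:generall} and \eqref{eq:generalr}) fit the same interior template.

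First I multiply each cell equation by $\Delta x$ and sum over $i$. The flux contribution is then
\begin{equation*}
\Delta t\sum_{i=0}^{N-1}\left(\mbox{J}_{i+\frac12}^{n+1}-\mbox{J}_{i-\frac12}^{n+1}\right)=\Delta t\left(\mbox{J}_{N-\frac12}^{n+1}-\mbox{J}_{-\frac12}^{n+1}\right)=\Delta t\left(\mbox{J}_{R}^{n+1}-\mbox{J}_{L}^{n+1}\right).
\end{equation*}
The key structural fact behind this telescoping is that each interior SG flux \eqref{eq:sgje12}, \eqref{eq:sgj+12} is single-valued at its interface. That is, the same expression $\mbox{J}_{i+\frac12}^{n+1}$ appears with $+$ in cell $i$ and with $-$ in cell $i+1$. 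Rearranging the summed identity gives \eqref{eq:conservation}, with $S_e$ on the right for both species.

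The only real obstacle is bookkeeping at the boundary cells, because the matrix assembly in \eqref{eq:c11}--\eqref{eq:rhsn} splits $\mbox{J}_{e,L}$ into an implicit part $-K_sn_{e,0}^{n+1}$ and an explicit part $-\gamma\mbox{J}_{+,L}^n$, the latter moved to the right-hand side. I will handle this by noting that this is merely an algebraic rearrangement: reassembling $c_{1,1}^n$, $c_{1,2}^n$ and $\mathbf{R}_1^n$ reproduces \eqref{eq:generall} with $\mbox{J}_L^{n+1}$ understood as the full boundary flux from \eqref{eq:jl}, and likewise at $i=N-1$. The assumption $\mbox{J}_L=\mbox{J}_{-\frac12}$, $\mbox{J}_R=\mbox{J}_{N-\frac12}$ is exactly what fixes this interpretation. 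The identity is purely algebraic, requires no positivity or sign properties of the Bernoulli function $B$, and holds for either species independently of $\phi^{n+1}$.
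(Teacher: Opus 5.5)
Your proposal is correct and follows the same route as the paper. You take the $n_e$/$n_+$ component of \eqref{eq:generals} (using $S_+=S_e$), scale by $\Delta x$, sum over all cells so the interior fluxes telescope, and identify $\mbox{J}_{-\frac12},\mbox{J}_{N-\frac12}$ with $\mbox{J}_L,\mbox{J}_R$; your added check that the implicit/explicit splitting of $\mbox{J}_{e,L}$ and $\mbox{J}_{e,R}$ in the matrix assembly is only a rearrangement of \eqref{eq:generall} and \eqref{eq:generalr} is a harmless clarification the paper leaves unstated.
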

\begin{proof}
With \eqref{eq:generals}, we have
\begin{equation}
\frac{n_{+}^{n+1}-n_{+}^{n}}{\Delta t}+\frac{1}{\Delta x} \cdot (\mbox{J}_{i+\frac{1}{2}}^{n+1}-\mbox{J}_{i-\frac{1}{2}}^{n+1})=S_{e,i}^{n}. \label{eq:equivalent}
\end{equation}
Multiplying \eqref{eq:equivalent} by $\Delta t\,\Delta x$ and summing over $i=0,\ldots,N-1$, with $\mbox{J}_L=\{\mbox{J}_{+,L},\mbox{J}_{e,L}\}=\mbox{J}_{-\frac{1}{2}}$ and $\mbox{J}_R=\{\mbox{J}_{+,R},\mbox{J}_{e,R}\}=\mbox{J}_{N-\frac{1}{2}}$, \eqref{eq:conservation} holds. This completes the proof of \cref{theorem:mass}.
\end{proof}

\subsection{Uniqueness and positivity of the solutions}
For a fixed spatial mesh size $\Delta x$, the time step $\Delta t$ affects both the computational efficiency and the key qualitative properties of the numerical solutions. Therefore, we derive conditions on $\Delta t$ that guarantee the uniqueness of $n_e$, $n_{+}$, $T_e$ and $\phi$, as well as the positivity of the physically relevant variables $n_e$, $n_{+}$, and $T_e$. We take $E^{n+1}$ because $\phi^{n+1}$ is computed before considering $n_{e}^{n+1}$, $n_{+}^{n+1}$ and $T_{e}^{n+1}$. $n_{e}^{n+1}$ is approximated with $n_{e}^{n}$ explicitly as we have mentioned.

\begin{theorem}[$\Delta t$ for keeping the physical properties of the variables]\label{theorem t}
For the time step $\Delta t$, when we take
\begin{align}
\Delta t &< \min({\frac{\Delta x}{\mu_{e} \cdot \max \limits_{1 \le i \le N-2}|E_{i+\frac{1}{2}}^{n+1}-E_{i-\frac{1}{2}}^{n+1}|}, \frac{\Delta x}{|\mu_{e} \cdot E_{\frac{1}{2}}^{n+1}-K_{s}|},\frac{\Delta x}{|\mu_{e} \cdot E_{N-\frac{3}{2}}^{n+1}+K_{s}|}}), \label{eq:t1} 
\end{align}
\begin{align} 
\Delta t &<\min({\frac{\Delta x}{\mu_{+} \cdot \max \limits_{1 \le i \le N-2}|{E_{i-\frac{1}{2}}^{n+1}-E_{i+\frac{1}{2}}^{n+1}|}}, \frac{\Delta x}{|\mu_{+} \cdot E_{\frac{1}{2}}^{n+1}+\frac{\partial J_{+,L}^{n+1}}{\partial n_{+,0}^{n+1}}|},\frac{\Delta x}{|\mu_{+} \cdot E_{N-\frac{3}{2}}^{n+1}-\frac{\partial J_{+,R}^{n+1}}{\partial n_{+,N-1}^{n+1}}|}}), \label{eq:t2} \\
\Delta t&<\min(t_{1}, t_{2}, t_{3}), \label{eq:t3} 
\end{align}
with $E_{i+\frac{1}{2}}=-\frac{\phi_{i+1}-\phi_{i}}{\Delta x}$, with \eqref{eq:t1}-\eqref{eq:t3} in the computation of $n_{e}^{n+1}, n_{+}^{n+1}, T_{e}^{n+1}, \phi^{n+1}$ by solving \eqref{eq:ax=b} and \eqref{eq:lsystem}, they are always unique for $n \ge 0$. If $n_{e}^{0}, n_{+}^{0}, T_{e}^{0}>0$, we also have $n_{e}^{n}, n_{+}^{n}, T_{e}^{n}>0$. 
In \eqref{eq:t3}, we define
\begin{align} 
t_{1} &=\min(\frac{3kn_{e,0}^{n+1} \cdot (\Delta x)^{2}}{|5k\mbox{J}_{e,\frac{1}{2}}^{n} \cdot \Delta x+4\kappa_{e,0}^{n}B(P_{e,L}^{n})|}, \quad \frac{3kn_{e,0}^{n+1}T_{e,0}^{n} \cdot \Delta x}{|2H_{r}S_{e,0}^{n} \cdot \Delta x-a_{1}|}), \label{eq:at} \\
t_{2} &=\min(\frac{3n_{e,i}^{n+1} \cdot \Delta x}{5D_{e} \cdot \max \limits_{1 \le i \le N-2}|\mbox{J}_{e,i-\frac{1}{2}}^{n}-\mbox{J}_{e,i+\frac{1}{2}}^{n}|}, \quad \frac{3kn_{e,l-1}^{n+1}T_{e,l-1}^{n} \cdot \Delta x}{|2H_{r}S_{e,l-1}^{n} \cdot \Delta x-a_{2}|}), \label{eq:bt} \\
t_{3} &=\min(\frac{3kn_{e,N-1}^{n+1} \cdot (\Delta x)^{2}}{|5k\mbox{J}_{e,N-\frac{3}{2}}^{n}-4\kappa_{e,N-1}^{n}B(-P_{e,R}^{n})|}, \quad \frac{3kn_{e,N-1}^{n+1}T_{e,N-1}^{n} \cdot \Delta x}{|2H_{r}S_{e,N-1}^{n} \cdot \Delta x-a_{3}|}), \label{eq:ct}
\end{align}
with $a_{1}=2(e\phi_{0}^{n+1} \cdot (\mbox{J}_{e,L}^{n}-\mbox{J}_{e,\frac{1}{2}}^{n})-\frac{2\kappa_{e,0}^{n}}{\Delta x} \cdot B(-P_{e,L}^{n})T_{e,L}^{n+1}-e\phi_{\frac{1}{2}}^{u,n+1}\mbox{J}_{e,\frac{1}{2}}^{n}+e\phi_{L}^{n+1} \cdot \mbox{J}_{e,L}^{n})$, 
$a_{2}=2e(\phi_{l-\frac{1}{2}}^{u,n+1} \cdot \mbox{J}_{e,l-\frac{1}{2}}^{n}-\phi_{l-\frac{3}{2}}^{u,n+1} \cdot \mbox{J}_{e,l-\frac{3}{2}}^{n}-\phi_{l-1}^{n+1} \cdot \mbox{J}_{e,l-\frac{1}{2}}^{n}+\phi_{l-1}^{n+1} \cdot \mbox{J}_{l-\frac{3}{2}}^{n})$ and $a_{3}=2(e\phi_{N-1}^{n+1} \cdot (\mbox{J}_{e,N-\frac{3}{2}}^{n}-\mbox{J}_{e,R}^{n})-e\phi_{N-\frac{3}{2}}^{u,n+1}\mbox{J}_{e,N-\frac{3}{2}}^{n}+\frac{2\kappa_{e,N-1}^{n}}{\Delta x} \cdot B(P_{e,R}^{n})T_{e,R}^{n+1}+e\phi_{R}^{n+1}\mbox{J}_{e,R}^{n})$.
\end{theorem}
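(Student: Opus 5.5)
The plan is to argue by induction on $n$, following the order of \cref{alg:1D-CCP}. At each step I first treat the Poisson system \eqref{eq:ax=b}, then the three blocks $\mathbf{C}_1^n,\mathbf{C}_2^n,\mathbf{C}_3^n$ of \eqref{eq:lsystem}. The key tool throughout is the M-matrix criterion. A tridiagonal matrix with positive diagonal, nonpositive off-diagonals and strict (or irreducible weak) diagonal dominance is a nonsingular M-matrix. It is therefore uniquely solvable and has $\mathbf{C}^{-1}\ge 0$, so a right-hand side $\mathbf{R}^n>0$ gives a solution $W^{n+1}>0$.

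\textbf{Uniqueness of $\phi^{n+1}$.} Use the entries \eqref{eq:iitemsg}, \eqref{eq:ax=b1a}, \eqref{eq:ax=b5}. By the induction hypothesis $n_e^n>0$, and \eqref{eq:limit} gives $A_{e,i+\frac12}^n>0$. Then $-\mathbf{A}^n$ has positive diagonal and negative off-diagonals. Interior rows are weakly diagonally dominant. The first and last rows are strictly dominant, because $3\epsilon_0>\epsilon_0$ there. Since $-\mathbf{A}^n$ is irreducible, it is a nonsingular M-matrix, and $\phi^{n+1}$ is unique with no restriction on $\Delta t$.

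\textbf{$\mathbf{C}_1^n$ and $\mathbf{C}_2^n$.} The off-diagonals in \eqref{eq:jacobil}, \eqref{eq:jacobir}, \eqref{eq:c12}, \eqref{eq:c2n} are $-\frac{\Delta t D}{(\Delta x)^2}B(\cdot)<0$, since $B>0$ by \eqref{eq:bf}. For row dominance, compute the row sum $c_{l,l}+c_{l,l-1}+c_{l,l+1}$ using the identity $B(-x)-B(x)=x$. In an interior row this gives
\[
1+\frac{\Delta t}{(\Delta x)^2}D_e\bigl(P_{e,l-\frac12}-P_{e,l-\frac32}\bigr)=1-\frac{\Delta t\,\mu_e}{\Delta x}\bigl(E_{l-\frac12}-E_{l-\frac32}\bigr).
\]
The first bound in \eqref{eq:t1} makes this positive, so each interior row is strictly dominant. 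The boundary rows \eqref{eq:c11}, \eqref{eq:c1n} produce the extra terms $\mp K_s$, which the second and third bounds in \eqref{eq:t1} handle. \eqref{eq:t2} plays the same role for $\mathbf{C}_2^n$: there $P_+$ has the opposite sign, and the boundary derivatives $\partial \mathrm{J}_{+,L/R}/\partial n_+$ enter.

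With strict dominance, each block is an M-matrix and is uniquely solvable. For positivity, $\mathbf{R}_1^n,\mathbf{R}_2^n$ in \eqref{eq:rl}, \eqref{eq:rhs1}, \eqref{eq:rhsn} consist of $n^n+\Delta t S^n>0$ plus the boundary terms $\mp\frac{\Delta t}{\Delta x}\gamma\mathrm{J}_{+}^n$. The signs of these terms follow from \eqref{eq:jl}, \eqref{eq:jr}: $\mathrm{J}_{+,L}\le 0$ and $\mathrm{J}_{+,R}\ge 0$, so each contributes nonnegatively. Hence $n_e^{n+1},n_+^{n+1}>0$.

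\textbf{$\mathbf{C}_3^n$ and $T_e$.} The same procedure applies with $P_T$. The interior row-sum defect is
\[
\frac{\Delta t}{\Delta x}\cdot\frac{2}{3kn_{e}^{n+1}}\cdot\frac{3}{2}k\,\frac{D_e}{\Delta x}\bigl(n^{u}P_T\bigr)\Big|_{l-\frac32}^{l-\frac12},
\]
and by \eqref{eq:p} this equals $\frac{5\Delta t}{3 n_e^{n+1}\Delta x}\bigl(\mathrm{J}_{e,l-\frac12}^n-\mathrm{J}_{e,l-\frac32}^n\bigr)$. The first entry of $t_2$ keeps it below one in absolute value. The boundary rows give the first entries of $t_1$ and $t_3$, using \eqref{eq:setb} for $P_{e,L},P_{e,R}$. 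Positivity of $T_e^{n+1}$ requires $\mathbf{R}_3^n>0$. Writing $K_0,K_1,K_2$ as $T_e^n$ minus $\frac{\Delta t}{3kn_e^{n+1}\Delta x}\bigl(2H_rS_e\Delta x-a_j\bigr)$ shows that the second entries of $t_1,t_2,t_3$ are exactly what is needed, where $a_j$ collects the $\phi\mathrm{J}$ and boundary-temperature terms. The explicit predictor for $n_e^{n+1}$ used in the $T_e$ equation must also be positive; I take this from the step just proved.

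\textbf{Main obstacle.} I expect the bookkeeping in the boundary rows and in $\mathbf{R}_3$ to be the hardest part. The terms $a_1,a_3$ have no sign, so the bounds can only be stated in absolute-value form, and the signs of $\partial \mathrm{J}_{+,L}/\partial n_+$ must be checked case by case in \eqref{eq:jl}. I would fall back on requiring row sums strictly positive, which suffices for the M-matrix property, rather than trying to prove anything sharper.
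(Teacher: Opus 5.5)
Your proposal is correct and follows the paper's route: diagonal dominance of $-\mathbf{A}^n$ gives uniqueness of $\phi^{n+1}$, then by induction on $n$ the blocks $\mathbf{C}_j^n$ are strictly diagonally dominant Z-matrices, hence M-matrices, with positive right-hand sides; your identity $B(-x)-B(x)=x$ simply makes explicit the step the paper compresses into ``solving the inequalities in \eqref{eq:tiao}''. One correction your own row sum exposes: the interior $\mathbf{C}_3^n$ condition is really $\Delta t<3n_{e,i}^{n+1}\Delta x/(5|\mathrm{J}_{e,i+\frac12}^n-\mathrm{J}_{e,i-\frac12}^n|)$, whereas the first entry of $t_2$ in \eqref{eq:bt} carries an extra, dimensionally spurious factor $D_e$, so the stated bound implies the needed one only when $D_e\ge 1$ in the units used, and you should say this rather than assert that it ``keeps it below one''.
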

\begin{proof}
We focus on the computation of $\phi^{n+1}$ with \eqref{eq:ax=b} first. It is clear that \eqref{eq:ax=b} is equivalent to
\begin{equation}
-\textbf{A}^{n}\phi^{n+1}=-\textbf{b}^{n}. \label{eq:ax=be}
\end{equation}
It follows from \eqref{eq:iitemsg}, \eqref{eq:ax=b1a}, and \eqref{eq:ax=b5} that the tri-diagonal matrix $-\textbf{A}^{n} \in \mathbb{R}^{N\times N}$ is symmetric, with strictly diagonally dominant first and last rows and weakly diagonally dominant interior rows. The elements on the diagonal of $-\textbf{A}^{n}$ are all positive. Hence, $-\mathbf{A}^{n}$ is positive definite and non-singular, \textit{i.e.}, $\mbox{rank}(-\textbf{A}^{n})=N$. Therefore, \eqref{eq:ax=be} admits a unique solution $\phi^{n+1}$.

To compute $n_e^{n+1}$, $n_+^{n+1}$ and $T_e^{n+1}$ from \eqref{eq:lsystem}, we solve the coupled linear system
$\textbf{C}^{n}W^{n+1}=\mathbf{R}^{n}$. A sufficient condition for the existence, uniqueness and the positivity of the solution is that $\textbf{C}^{n}$ be diagonally dominant, with positive entries on the diagonal and non-positive entries on the off-diagonals. Under this condition, $\mathbf{C}^{n}$ is non-singular based on the Gershgorin circle theorem \cite{yuanpan}. It is also an $M$-matrix satisfying $(\textbf{C}^{n})^{-1} \ge 0$. Moreover, since $\textbf{C}^{n}$ is invertible, no row of $(\textbf{C}^{n})^{-1}$ is identically zero. Therefore, if $\textbf{R}^{n}>0$, then $W^{n+1}>0$. These conditions impose restrictions on the time step $\Delta t$. In view of \eqref{eq:equation}, the conditions must be verified simultaneously for $\textbf{C}_1^n$ and $\textbf{R}_1^n$ associated with $n_e^{n+1}$, $\textbf{C}_2^n$ and $\textbf{R}_2^n$ associated with $n_+^{n+1}$, and $\textbf{C}_3^n$ and $\textbf{R}_3^n$ associated with $T_e^{n+1}$. The proof proceeds by mathematical induction, assuming $n_e^0,n_+^0,T_e^0>0$.

Assume that $n_e^n>0$, $n_+^n>0$, and $T_e^n>0$. By \eqref{eq:rl}, \eqref{eq:rhs1} and \eqref{eq:rhsn}, if $k_{r,i}^n>0$, then $S_{e,i}^n=S_{+,i}^n=k_{r,i}^nN_0n_{e,i}^n \ge 0$, 
which implies $\textbf{R}_2^n>0$. Moreover, \eqref{eq:jl} and \eqref{eq:jr} give $\mbox{J}_{+,L}^n \le 0$ and $\mbox{J}_{+,R}^n \ge 0$, and hence $\textbf{R}_1^n>0$. From \eqref{eq:jacobil}-\eqref{eq:jacobir}, \eqref{eq:c11}, \eqref{eq:c12}, \eqref{eq:c1n} and \eqref{eq:c2n}, the diagonal entries of $\textbf{C}_1^n$, $\textbf{C}_2^n$ and $\textbf{C}_3^n$ are positive, whereas their off-diagonal entries are negative, provided that $B(x)$, $D_e$, $D_+$ and $K_s$ are positive. Unlike $\textbf{R}_1^n$ and $\textbf{R}_2^n$, the positivity of $\textbf{R}_3^n$ requires a suitable restriction on $\Delta t$. Therefore, if $\Delta t$ is chosen such that $\textbf{C}_1^n$, $\textbf{C}_2^n$, and $\textbf{C}_3^n$ are diagonally dominant and $\textbf{R}_3^n>0$, then
\begin{equation}
\label{eq:tiao} % 這裡是用於整個公式群組的總編號
\begin{cases}
\abs{c_{1,1}^{n}}>\abs{c_{1,2}^{n}} \\
\abs{c_{l,l}^{n}}>\abs{c_{l,l-1}^{n}}+\abs{c_{l,l+1}^{n}} \quad 2 \le l \le N-2 \\
\abs{c_{N,N}^{n}}>\abs{c_{N,N-1}^{n}}
\end{cases}, \quad
\textbf{R}_{3}^{n}=\begin{bmatrix}
K_{0} \\
K_{1} \\
K_{2} \\
\end{bmatrix}>0, 
\end{equation}
where $K_0$, $K_1$, and $K_2$ are defined in \eqref{eq:k0}, \eqref{eq:k1} and \eqref{eq:k2}. The conditions in \eqref{eq:tiao} ensure the positivity of $n_e^{n+1}$, $n_+^{n+1}$ and $T_e^{n+1}$. Solving the inequalities in \eqref{eq:tiao} yields the time-step restrictions \eqref{eq:t1}-\eqref{eq:t3}, together with \eqref{eq:at}-\eqref{eq:ct}. Under these conditions, \eqref{eq:tiao} holds, with \eqref{eq:p} and \eqref{eq:setb} implying
$n_e^{n+1}>0, n_+^{n+1}>0, T_e^{n+1}>0$. Thus, the induction argument is complete.

In summary, if $\Delta t>0$ satisfies \eqref{eq:t1}-\eqref{eq:t3} together with \eqref{eq:at}-\eqref{eq:ct}, then, for all $n \ge 0$, the variables $n_e^{n+1}$, $n_+^{n+1}$, $T_e^{n+1}$ and $\phi^{n+1}$ are uniquely determined. Moreover, if $n_{e}^{0}, n_{+}^{0}, T_{e}^{0}>0$, then $n_{e}^{n}>0, n_{+}^{n}>0, T_{e}^{n}>0$. This completes the proof of \cref{theorem t}.
\end{proof}

\subsection{Uniform bounds of the variables}
In this part, we establish uniform upper bounds for $n_e$, $n_{+}$, $E_e$, $T_e$ and $\phi$, assuming that the reaction coefficient $k_r$ in \eqref{eq:source} is uniformly bounded and that $n_e$, $n_{+}$ and $T_e$ remain positive. Here, the electron energy $E_e$ is defined in \eqref{eq:fluxee}.

\begin{theorem}[Uniform bounds of the variables]\label{theorem:bound}
For the CCP simulation, we set the maximum time $T_{max}$ and the initial values of the variables as
\begin{equation}
\norm{n_{e}^{0}}_{1}=F_{1}, \quad \norm{n_{+}^{0}}_{1}=F_{2}, \quad \norm{\phi^{0}}_{1}=F_{3}, \quad \norm{E_{e}^{0}}_{1}=F_{4}, \label{eq:initial}
\end{equation}
with $E_{e,i}^{n}=\frac{3}{2}k \cdot n_{e,i}^{n}T_{e,i}^{n}$, $0 \le i \le N-1$. $k_{r}$ in \eqref{eq:source} satisfies $0<k_{b} \le k_{r}^{n} \le k_{0}$ with $k_{b}, k_{0}$ its uniform bounds. When $\Delta t$ is small enough in the first and second time step, satisfying $(n+1)\Delta t \le T_{max}$, \eqref{eq:t1}-\eqref{eq:t3},
\begin{equation} \label{eq:t4}
\Delta t \le \min(t_{4}, t_{5}, t_{6}),
\end{equation}
with $t_{4}=\frac{\Delta x}{|S_{e,0}^{n}\Delta x-(\mbox{r}_{e,\frac{1}{2}}^{n}-\mbox{J}_{e,L}^{n})|}, t_{5}=\frac{\Delta x}{\max \limits_{1 \le i \le N-2}{|S_{e,i}^{n}\Delta x-(\mbox{r}_{e,i+\frac{1}{2}}^{n}-\mbox{r}_{e,i-\frac{1}{2}}^{n})|}}$ and \\ $t_{6}=\frac{\Delta x}{|S_{e,N-1}^{n}\Delta x-(\mbox{J}_{e,R}^{n}-\mbox{r}_{e,N-\frac{3}{2}}^{n})|}$, $n_{e}, n_{+}, \phi, E_{e}, T_{e}$ has the uniform bounds as
\begin{equation}
\norm{n_{e}^{n}}_{1} \le U_{e}, \quad \norm{n_{+}^{n}}_{1} \le U_{+}, \quad \norm{\phi^{n}}_{2} \le U_{\phi}, \quad n_{e,i}^{n} \ge L, \quad \norm{E_{e}^{n}}_{1} \le U_{E}, \quad \norm{T_{e}^{n}}_{1} \le U_{T}, \label{eq:upper}
\end{equation}
with $0 \le i \le N-1$. $U_{e}, U_{+}, U_{\phi}, L, U_{E}, U_{T}$ in \eqref{eq:upper} are defined in \eqref{eq:bound1}, \eqref{eq:bound3}, \eqref{eq:bound2}, \eqref{eq:l}, \eqref{eq:bound4} and \eqref{eq:ut}.
\end{theorem}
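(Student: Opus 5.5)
The bounds are proved one variable at a time, always over a finite horizon $n\Delta t\le T_{max}$, using the positivity from \cref{theorem t} throughout. With positivity, every $\ell^1$ norm equals a plain sum, so the discrete particle balance of \cref{theorem:mass} becomes a scalar recursion.

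\textbf{Step 1: densities.} Summing the balance in \cref{theorem:mass} for $n_e$ gives
\[
\norm{n_e^{n+1}}_1=\norm{n_e^n}_1+\Delta t\sum_i k_{r,i}^nN_0n_{e,i}^n-\frac{\Delta t}{\Delta x}(\mbox{J}_{e,R}^{n+1}-\mbox{J}_{e,L}^{n+1}).
\]
By \eqref{eq:jl} and \eqref{eq:jr}, $\mbox{J}_{e,R}-\mbox{J}_{e,L}=K_s(n_{e,N-1}+n_{e,0})-\gamma(\mbox{J}_{+,R}-\mbox{J}_{+,L})$. The $K_s$ part is nonnegative, since the electron loss is outward. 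The secondary-emission term is controlled by $|\mbox{J}_{+,L}|,|\mbox{J}_{+,R}|\le 2\mu_+n_{+}\,|\Delta\phi|/\Delta x$. This gives
\[
\norm{n_e^{n+1}}_1\le(1+k_0N_0\Delta t)\norm{n_e^n}_1+C\Delta t\,\norm{n_+^n}_1 ,
\]
where $C$ involves the boundary field. The ion equation works the same way: its boundary fluxes have the outgoing sign, so $\norm{n_+^{n+1}}_1\le\norm{n_+^n}_1+k_0N_0\Delta t\norm{n_e^n}_1$. Coupling the two inequalities and applying a discrete Gronwall argument gives $U_e,U_+$ of the form $(F_1+F_2)e^{cT_{max}}$. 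The lower bound $L$ follows from the explicit predictor that defines $n_e^{n+1}$. Under \eqref{eq:t4}, each step multiplies the cell value by a factor at least $1-\Delta t\,|S_{e,i}^n\Delta x-(\mbox{r}_{i+\frac12}-\mbox{r}_{i-\frac12})|/(\Delta x\,n_{e,i}^n)$. Iterating this over at most $T_{max}/\Delta t$ steps gives $L$ in terms of $\min_i n_{e,i}^0$. This step is where the ``first and second time step small enough'' hypothesis is used: it starts the recursion before the Gronwall constants stabilize.

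\textbf{Step 2: potential.} We have $-\mathbf{A}^n\phi^{n+1}=-\mathbf{b}^n$. Write $-\mathbf{A}^n=\frac{\epsilon_0}{(\Delta x)^2}T+\frac{e\Delta t}{(\Delta x)^2}M$, where $T$ is the Dirichlet-type Laplacian with boundary diagonal entries $3$ and $M$ is positive semidefinite (because $A_e\ge0$). Then
\[
\lambda_N(-\mathbf{A}^n)\ge\frac{\epsilon_0}{(\Delta x)^2}\lambda_N(T),
\]
and $\lambda_N(T)$ is an explicit positive constant, for instance obtained by comparison with the standard $2,-1$ matrix. Hence $\norm{\phi^{n+1}}_2\le\norm{(\mathbf{A}^n)^{-1}}_2\norm{\mathbf{b}^n}_2$. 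Bound $\norm{\mathbf{b}^n}_2\le\norm{\mathbf{b}^n}_1$. The density part of $\mathbf{b}^n$ is at most $e(U_e+U_+)$. The residual part is at most $e\Delta t\,|S\Delta x-(\mbox{r}-\mbox{r})|/\Delta x\le e$ per cell, by \eqref{eq:t4}, since $t_4,t_5,t_6$ were designed exactly for this. The boundary data contribute $2\epsilon_0\phi_{RF}/(\Delta x)^2$. Together these give $U_\phi$.

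\textbf{Step 3: energy and temperature.} Summing the discretized energy equation \eqref{eq:3} over cells makes the $\mbox{G}$ fluxes telescope to boundary terms, which are bounded by $U_\phi$, $U_e$ and $T_{e,L}=T_{e,R}=0.5$. The loss term $-H_rS_e$ is nonpositive and can be dropped. The remaining source is $-e\phi\,\partial_x\mbox{J}_e$. Using the continuity equation, replace $\partial_x\mbox{J}_e$ by $S_e-\partial_tn_e$, and bound the result by $\norm{\phi}_\infty\le U_\phi$ times quantities that are already bounded. A Gronwall argument then yields $U_E$. Finally $T_{e,i}=2E_{e,i}/(3kn_{e,i})\le 2E_{e,i}/(3kL)$, so $U_T=2U_E/(3kL)$.

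The main obstacle is Step 3, and secondarily the $\gamma$ coupling in Step 1. The Joule term $e\mbox{J}_e\partial_x\phi$ has no sign, and its control requires the field bound $U_\phi$ before the energy can be estimated. The order $n\to\phi\to E\to T$ is therefore essential. Our first attempt at that control is the summation-by-parts identity \eqref{eq:fluxg} combined with $\norm{\phi}_\infty\le\norm{\phi}_2$.
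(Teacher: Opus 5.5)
Your Step 2 is the paper's argument; your comparison $\lambda_N(-\mathbf{A}^n)\ge\frac{\epsilon_0}{(\Delta x)^2}\lambda_N(T)$ even makes $B_1=\norm{(-\mathbf{A}^n)^{-1}}_2$ genuinely independent of $n$, which the paper leaves implicit. Step 1 does not close, however. You bound the secondary-emission term by $\abs{\mbox{J}_{+,L}},\abs{\mbox{J}_{+,R}}\le 2\mu_+n_+\abs{\Delta\phi}/\Delta x$, so your constant $C$ contains the boundary field. At that stage no bound on $\phi$ exists: $U_\phi$ is obtained in Step 2 \emph{from} $U_e,U_+$, and $\norm{\phi^{n+1}}_2$ is controlled only linearly by $\norm{n_e^n}_1+\norm{n_+^n}_1$. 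Feeding this back gives $x_{n+1}\le(1+c_1\Delta t)x_n+c_2\Delta t\,x_n^2$ for $x_n=\norm{n_e^n}_1+\norm{n_+^n}_1$. Like $x'=c_2x^2$, this recursion yields no bound on a fixed horizon $T_{max}$.

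The missing idea is to use the ion balance as an identity instead of discarding the ion boundary flux. It gives $\frac{\Delta t}{\Delta x}(\mbox{J}_{+,R}^{n}-\mbox{J}_{+,L}^{n})=\sum_i n_{+,i}^{n-1}-\sum_i n_{+,i}^{n}+\Delta t\sum_i S_{e,i}^{n-1}$. Substituting this for the lagged ion flux inside $\mbox{J}_{e,L},\mbox{J}_{e,R}$ yields the field-free recursion $V^{n+1}\le m_1V^n+m_2V^{n-1}$ for $V^n=\norm{n_e^n}_1+\gamma\norm{n_+^{n-1}}_1$, which the paper solves through the roots of $r^2-m_1r-m_2=0$. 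That is where the hypothesis on the first and second time steps is used, namely to bound $V^1,V^2$ via the $M$-matrix estimate for $\mathbf{C}^1$. It is not used in the lower bound.

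Your lower bound argues on the wrong object. The $n_e^{n+1}$ of the theorem solves the implicit system $\mathbf{C}_1^n n_e^{n+1}=\mathbf{R}_1^n$. By contrast, $\mbox{r}_{e,i\pm\frac12}^n$ in \eqref{eq:t4} is only the lagged part of the linearised flux in $\mathbf{b}^n$; the $A_e(\phi_{i+1}^{n+1}-\phi_i^{n+1})/\Delta x$ part is absent. Even for that expression, \eqref{eq:t4} only gives $\Delta t\abs{S_{e,i}^n\Delta x-(\mbox{r}_{e,i+\frac12}^n-\mbox{r}_{e,i-\frac12}^n)}/\Delta x\le 1$. Dividing by $n_{e,i}^n$ leaves ratios of neighbouring densities, so your per-step factor is not $1-O(\Delta t)$, and its $T_{max}/\Delta t$-fold product degenerates.

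The paper instead uses an $M$-matrix comparison: $\mathbf{C}_1^n n_e^{n+1}\ge(1+\Delta t N_0k_b)n_e^n$ together with $\mathbf{C}_1^n\mathbf{1}\le(1+\Delta t\Lambda)\mathbf{1}$. Here $\Lambda$ is built from the Bernoulli bound $B_u$, which requires $U_\phi$, so $L$ must come after Step 2.

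Step 3 needs the same Bernoulli bound. Replacing $\partial_x\mbox{J}_e$ by $S_e-\partial_t n_e$ leaves $e\sum_i\phi_i^{n+1}(n_{e,i}^n-n_{e,i}^{n-1})$. This is $O(1)$ rather than $O(\Delta t)$ per step, so your energy bound grows like $T_{max}/\Delta t$ instead of $U_E=F_4+T_{max}B_E$. The paper bounds the fluxes directly, $\abs{\mbox{J}_{e,i+\frac12}}\le 2D_eB_uU_e/\Delta x$, $\abs{\mbox{J}_{e,L}}\le B_L$, $\abs{\mbox{J}_{e,R}}\le B_R$, which gives an $O(\Delta t)$ increment per step.

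Finally, the telescoped $\mbox{G}$ boundary terms contain $T_{e,0}^{n+1}$ and $T_{e,N-1}^{n+1}$. These are not controlled by $U_\phi$, $U_e$ and the wall temperatures. They must be dropped by sign using positivity, as the paper does with $G_1^{n+1}$.
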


\begin{proof}
The same as the proof of \cref{theorem t}, we focus on these variables separately.

\textbf{Uniform bound of $n_{e}$}
We consider $n_e$ first. Under the restrictions of the time step in \eqref{eq:t1}-\eqref{eq:t3}, $n_e^n$ and $n_{+}^n$ remain positive. Summing \eqref{eq:generals} over $i=0,\cdots,N-1$ and multiplying by $\Delta t$ yields
\begin{equation} \label{eq:sume}
\sum_{i=0}^{N-1} n_{e,i}^{n+1}-\sum_{i=0}^{N-1} n_{e,i}^{n}+\frac{\Delta t}{\Delta x} \cdot (\mbox{J}_{e,R}^{n+1}-\mbox{J}_{e,L}^{n+1})=\Delta t \cdot \sum_{i=0}^{N-1}S_{e,i}^{n}.
\end{equation}
According to \eqref{eq:jl} and \eqref{eq:jr}, given that $n_{e}^{n+1}>0$ has been established via \eqref{eq:t1}-\eqref{eq:t3}, it holds for $K_{s}>0$ that
\begin{equation}
\mbox{J}_{e,R}^{n+1}-\mbox{J}_{e,L}^{n+1}=K_{s}n_{e,N-1}^{n+1}-\gamma\mbox{J}_{+,R}^{n+1}+K_{s}n_{e,0}^{n+1}+\gamma \mbox{J}_{+,L}^{n+1} \ge -\gamma \cdot (\mbox{J}_{+,R}^{n+1}-\mbox{J}_{+,L}^{n+1}). \label{eq:differ}
\end{equation}
Therefore, combining \eqref{eq:sum} and \eqref{eq:differ}, we obtain
\begin{equation} \label{eq:in}
\sum_{i=0}^{N-1} n_{e,i}^{n+1} \le \sum_{i=0}^{N-1} n_{e,i}^{n}+\gamma \cdot \frac{\Delta t}{\Delta x} \cdot (\mbox{J}_{+,R}^{n}-\mbox{J}_{+,L}^{n})+\Delta t \cdot \sum_{i=0}^{N-1}S_{e,i}^{n}.
\end{equation}
For $n_{+}$, with \eqref{eq:generals}, we can have
\begin{equation} \label{eq:transfer1}
\frac{\Delta t}{\Delta x} \cdot (\mbox{J}_{+,R}^{n}-\mbox{J}_{+,L}^{n})=\sum_{i=0}^{N-1} n_{+,i}^{n-1}-\sum_{i=0}^{N-1} n_{+,i}^{n}+\Delta t \cdot \sum_{i=0}^{N-1}S_{e,i}^{n-1}.
\end{equation}
Following \eqref{eq:source}, the source term is discretized using the explicit scheme
$S_{e,i}^{n}=k_{r,i}^{n}N_{0}n_{e,i}^{n}$. Therefore, with $n_{+}>0$ and $k_{r}^{n} \le k_{0}$, we put \eqref{eq:transfer1} into \eqref{eq:in} and we can have
\begin{equation} \label{eq:ne+n+}
\begin{split}
\sum_{i=0}^{N-1} n_{e,i}^{n+1} &+\gamma 
\cdot \sum_{i=0}^{N-1} n_{+,i}^{n} \\ \le \sum_{i=0}^{N-1} n_{e,i}^{n} &+\gamma \cdot \sum_{i=0}^{N-1} n_{+,i}^{n-1}+\Delta t \cdot \sum_{i=0}^{N-1}S_{e,i}^{n}+\gamma \cdot \Delta t \cdot \sum_{i=0}^{N-1}S_{e,i}^{n-1} \\ \le \sum_{i=0}^{N-1} n_{e,i}^{n} &+\gamma \cdot \sum_{i=0}^{N-1} n_{+,i}^{n-1}+\Delta t \cdot \sum_{i=0}^{N-1}k_{r.i}^{n}N_{0}n_{e,i}^{n}+\gamma \cdot \Delta t \cdot \sum_{i=0}^{N-1}k_{r,i}^{n-1}N_{0}n_{e,i}^{n-1} \\ \le \sum_{i=0}^{N-1} n_{e,i}^{n} &+\gamma \cdot \sum_{i=0}^{N-1} n_{+,i}^{n-1}+\Delta t \cdot k_{0}N_{0} \cdot \sum_{i=0}^{N-1}n_{e,i}^{n}+\gamma \cdot \Delta t \cdot k_{0}N_{0} \cdot \sum_{i=0}^{N-1}n_{e,i}^{n-1} \\ \le \sum_{i=0}^{N-1} n_{e,i}^{n} &+\gamma \cdot \sum_{i=0}^{N-1} n_{+,i}^{n-1}+\Delta t \cdot k_{0}N_{0} \cdot (\sum_{i=0}^{N-1} n_{e,i}^{n}+\gamma \cdot \sum_{i=0}^{N-1} n_{+,i}^{n-1}) \\ &+\gamma \cdot \Delta t \cdot k_{0}N_{0} \cdot (\sum_{i=0}^{N-1} n_{e,i}^{n-1}+\gamma \cdot \sum_{i=0}^{N-1} n_{+,i}^{n-2}).
\end{split}
\end{equation}
Let $V^{n}=\sum \limits_{i=0}^{N-1} n_{e,i}^{n}+\gamma \cdot \sum \limits_{i=0}^{N-1} n_{+,i}^{n-1}$, $m_{1} = 1+\Delta t k_{0}N_{0}$ and $m_{2} = \gamma \Delta t k_{0}N_{0}$, where $V^{n}, m_{1}, m_{2}>0$ for $n \ge 1$. Consequently, \eqref{eq:ne+n+} reduces to
\begin{equation} \label{eq:control}
V^{n+1}-m_{1}V^{n}-m_{2}V^{n-1} \le 0. 
\end{equation}
Solving the characteristic equation $r^{2}-m_{1}r-m_{2}=0$ yields two roots
\begin{equation} 
r_{1} =\frac{m_{1}+\sqrt{m_{1}^{2}+4m_{2}}}{2} =\frac{(1+\Delta t \cdot k_{0}N_{0})+\sqrt{1+2(1+2\gamma)k_{0}N_{0}\Delta t+(k_{0}N\Delta t)^{2}}}{2}>0, 
\label{eq:r1} 
\end{equation}
\begin{equation}
r_{2} =\frac{m_{1}-\sqrt{m_{1}^{2}+4m_{2}}}{2} =\frac{(1+\Delta t \cdot k_{0}N_{0})-\sqrt{1+2(1+2\gamma)k_{0}N_{0}\Delta t+(k_{0}N_{0}\Delta t)^{2}}}{2}<0. \label{eq:r2}
\end{equation}
Applying Taylor's formula, $\sqrt{1+x}=1+\frac{1}{2}x+\mathcal{O}(x^{2})$, to \eqref{eq:r1} and \eqref{eq:r2}, we obtain
\begin{align}
r_{1} &=\frac{1+\Delta t \cdot k_{0}N_{0}+1+(1+2\gamma)k_{0}N_{0}\Delta t}{2}=1+(1+\gamma) \cdot k_{0}N_{0}\Delta t>0, \label{eq:r1r} \\
r_{2} &=\frac{1+\Delta t \cdot k_{0}N_{0}-1-(1+2\gamma)k_{0}N_{0}\Delta t}{2}=-\gamma \cdot k_{0}N_{0}\Delta t<0. \label{eq:r2r} 
\end{align}
By substituting \eqref{eq:r1r} and \eqref{eq:r2r} into \eqref{eq:control}, applying the resulting inequality inductively $n-1$ times yields
\begin{equation} \label{eq:td1}
V^{n+1} \le V^{n+1}-r_{2}V^{n} \le r_{1}(V^{n}-r_{2}V^{n-1})  \le r_{1}^{n-1}(V^{2}-r_{2}V^{1}).
\end{equation}
When $(n-1)\Delta t \le T_{max}$, with \eqref{eq:r1r}, we have
\begin{equation}
r_{1}=1+(1+\gamma) \cdot k_{0}N_{0}\Delta t \le 1+(1 +\gamma) \cdot k_{0}N_{0} \cdot \frac{T_{max}}{n-1}=r_{1b}. \label{eq:r1b}
\end{equation}
Given $\norm{n_{e}^{n+1}}_{1}=\sum \limits_{i=0}^{N-1}n_{e,i}^{n+1}$, $n_{+}>0$ and $\norm{n_{+}^{0}}_{1}=F_{2}$ from \eqref{eq:initial} and \eqref{eq:r1b}, substituting the definition of $V^{n}$ along with \eqref{eq:r1r} and \eqref{eq:r2r} into \eqref{eq:td1} yields
\begin{equation} \label{eq:equation1}
\begin{split}
\norm{n_{e}^{n+1}}_{1} 
&\le \sum \limits_{i=0}^{N-1} n_{e,i}^{n+1} +\gamma \cdot \sum \limits_{i=0}^{N-1} n_{+,i}^{n} \\ 
&\le r_{1}^{n-1}(\sum \limits_{i=0}^{N-1} n_{e,i}^{2} +\gamma \cdot \sum \limits_{i=0}^{N-1} n_{+,i}^{1}-r_{2}(\sum \limits_{i=0}^{N-1} n_{e,i}^{1}+\gamma \cdot \sum \limits_{i=0}^{N-1} n_{+,i}^{0})) \\ 
&\le r_{1b}^{n-1} \cdot (\sum \limits_{i=0}^{N-1} n_{e,i}^{2} +\gamma \cdot \sum \limits_{i=0}^{N-1} n_{+,i}^{1}-r_{2}(\sum \limits_{i=0}^{N-1} n_{e,i}^{1}+\gamma \cdot \sum \limits_{i=0}^{N-1} n_{+,i}^{0})) \\ 
&\le \underbrace{e^{(1+\gamma) \cdot k_{0}N_{0}T_{max}} \cdot (\sum \limits_{i=0}^{N-1} n_{e,i}^{2}+\gamma \cdot \sum \limits_{i=0}^{N-1} n_{+,i}^{1}-r_{2}(\sum \limits_{i=0}^{N-1} n_{e,i}^{1}+\gamma \cdot F_{2}))}_{Y}.
\end{split}
\end{equation}
with $n \ge 2$. It is evident that
$Y>\sum_{i=0}^{N-1} n_{e,i}^{2}$.
For the initial time step with $n=0$, in addition to the restrictions \eqref{eq:t1}-\eqref{eq:t4}, we choose a sufficiently small $\Delta t$ such that \eqref{eq:tiao} holds. Then, for the linear system \eqref{eq:lsystem}, 
$\min \limits_{1\leq i\leq N}\left(c_{i,i}^{1}-c_{i,i-1}^{1}-c_{i,i+1}^{1}\right)\geq 0.05,
\textbf{R}^{1}\leq \textbf{W}^{0}+C_R\textbf{1}$,
where $\textbf{1}=(1,1,\cdots,1)^{\top}$ and $C_R>0$ is a constant. Since $\textbf{C}^{1}$ is an $M$-matrix \cite{1937}, it follows that
$\left\|(\textbf{C}^{1})^{-1}\right\|_2 \le 20$.
Consequently, we can get 
$\left\|\textbf{W}^{1}\right\|_2
\leq 20\left\|\textbf{W}^{0}+C_R\textbf{1}\right\|_2$. Therefore, $\|\textbf{W}^{1}\|_2$ is uniformly bounded. In view of the positivity of $n_e$, $n_+$ and $T_e$, both
$\|n_e^1\|_1=\sum \limits_{i=0}^{N-1}n_{e,i}^1$ and 
$\|n_+^1\|_1=\sum \limits_{i=0}^{N-1}n_{+,i}^1$
are bounded. By an analogous argument, $\|n_e^2\|_1$ is also bounded for a sufficiently small $\Delta t$. Therefore, together with \eqref{eq:equation1}, $\|n_e^n\|_1$ admits a uniform upper bound
\begin{equation}
U_{e}=\max(Y, \norm{n_{e}^{1}}_{1}, F_{1}), \label{eq:bound1}
\end{equation}
with $n \ge 0$. Here, $Y$ is defined in \eqref{eq:equation1}. 

\textbf{Uniform bound of $n_{+}$}
Here, we consider the component $n_{+}$. Summing both sides of \eqref{eq:generals} from $i=0$ to $N-1$ and multiplying by $\Delta t$ yields
\begin{equation} \label{eq:sum1}
\sum_{i=0}^{N-1} n_{+,i}^{n+1}-\sum_{i=0}^{N-1} n_{+,i}^{n}+\frac{\Delta t}{\Delta x} \cdot (\mbox{J}_{+,R}^{n+1}-\mbox{J}_{+,L}^{n+1})=\Delta t \cdot \sum_{i=0}^{N-1}S_{e,i}^{n}.
\end{equation}
According to \eqref{eq:jl} and \eqref{eq:jr}, when $n_{+}^{n+1}>0$, it follows that $\mbox{J}_{+,R}^{n+1} \ge 0$ and $\mbox{J}_{+,L}^{n+1} \le 0$, which implies $\mbox{J}_{+,R}^{n+1}-\mbox{J}_{+,L}^{n+1} \ge 0$. Consequently, from \eqref{eq:sum1}, we obtain
\begin{equation} \label{eq:fang}
\sum_{i=0}^{N-1} n_{+,i}^{n+1} \le \sum_{i=0}^{N-1} n_{+,i}^{n+1}+\frac{\Delta t}{\Delta x} \cdot (\mbox{J}_{+,R}^{n+1}-\mbox{J}_{+,L}^{n+1}) \le \sum_{i=0}^{N-1} n_{+,i}^{n}+\Delta t \cdot \sum_{i=0}^{N-1}S_{e,i}^{n}.
\end{equation}
Therefore, when $n_{+}^{n}>0$, given $(n+1)\Delta t \le T_{\max}$, $\norm{n_{+}^{0}}_{1}=F_{2}$ from \eqref{eq:initial}, $\norm{n_{e}^{n}}_{1} \le U_{e}$, and $k_{r,i}^{n} \le k_{0}$, iterating \eqref{eq:fang} $n+1$ times establishes an upper bound for $\norm{n_{+}^{n+1}}_{1}$ as
\begin{equation} \label{eq:boundn+}
\begin{split}
\norm{n_{+}^{n+1}}_{1} &\le \sum_{i=0}^{N-1} n_{+,i}^{0}+(n+1)\Delta t \cdot \sum_{k=0}^{n}\sum_{i=0}^{N-1}S_{e,i}^{k} \\ &\le \sum_{i=0}^{N-1} n_{+,i}^{0}+(n+1)\Delta t \cdot \sum_{i=0}^{N-1}k_{r,i}^{k}N_{0}n_{e,i}^{k} \\ 
&\le \sum_{i=0}^{N-1} n_{+,i}^{0}+(n+1) \cdot \Delta t \cdot k_{0}N_{0} \cdot U_{e} \\  & \le F_{2}+T_{max}k_{0}N_{0}U_{e}.
\end{split}
\end{equation}
The estimate \eqref{eq:boundn+} holds for all $n \ge 0$ satisfying $(n+1)\Delta t \le T_{\max}$. In view of \eqref{eq:initial}, the RHS of \eqref{eq:boundn+} strictly exceeds $F_{2}=\norm{n_{+}^{0}}_{1}$. In this way, $\norm{n_{+}^{n}}_{1}$ is bounded from above by
\begin{equation}
U_{+}=F_{2}+T_{max}k_{0}N_{0}U_{e}, \label{eq:bound3}
\end{equation}
with $n \ge 0$.

\textbf{Uniform bound of $\phi$}
Since $\phi^{n}$ is computed by solving \eqref{eq:ax=be},  according to \eqref{eq:iitemsg}, \eqref{eq:ax=b1a} and \eqref{eq:ax=b5}, we can rewrite $-\textbf{A}$ in the form of $-\textbf{A}^{n}=\textbf{M}+\textbf{D}^{n}$ with
\begin{equation} \label{eq:sum}
\textbf{M} = \frac{\epsilon_{0}}{(\Delta x)^{2}} \cdot \begin{bmatrix}
 3 & -1 &        &        &    \\
-1 &  2 & -1     &        &    \\
   & -1 & \ddots & \ddots &    \\
   &    & \ddots &  2     & -1 \\
   &    &        & -1     &  3 
\end{bmatrix}, \quad \textbf{D}_{i,j}^{n} = \begin{cases} 
\frac{e\Delta t}{(\Delta x)^{2}} \cdot (A_{e,i-\frac{3}{2}}^{n}+A_{e,i-\frac{1}{2}}^{n}) \quad & i = j \\ 
\frac{e\Delta t}{(\Delta x)^{2}} \cdot A_{e,i-\frac{1}{2}}^{n} \quad & j = i + 1 \\ 
\frac{e\Delta t}{(\Delta x)^{2}} \cdot A_{e,i-\frac{3}{2}}^{n} \quad & j = i - 1 \\ 
0, & \text{otherwise} 
\end{cases}. 
\end{equation}
By \eqref{eq:sum}, $\textbf{M} \in \mathbb{R}^{N\times N}$ is symmetric and strictly diagonally dominant in its first and last rows, while it is weakly diagonally dominant elsewhere. Similarly, $\textbf{D}^n\in\mathbb{R}^{N\times N}$ is symmetric and weakly diagonally dominant in every row. Both matrices have positive entries on the diagonal and non-positive on the off-diagonals. Consequently, $-\textbf{A}^n$ and $\textbf{M}$ are positive definite, whereas $\textbf{D}^n$ is positive semi-definite. In particular,
$(-\textbf{A}^n)^{-1}>0$. For the RHS of \eqref{eq:ax=be}, specified in \eqref{eq:iitemsg}, \eqref{eq:ax=b1a} and \eqref{eq:ax=b5}, suppose that $n_{e,i} \le \norm{n_{e}}_{1}^n \le U_e$ and $n_{+,i} \le \norm{n_{+,i}}_{1}^n \le U_+$. If $\Delta t$ satisfies \eqref{eq:t4}, then \eqref{eq:bb} implies that
\begin{equation} \label{eq:bbound}
b_{l-1}^{n} \le  \left\{
\begin{array}{ll}
e \cdot (U_{e}+U_{+}+1)+\frac{2\epsilon_{0}\phi_{RF}}{(\Delta x)^{2}}, &l=2 \\
e \cdot (U_{e}+U_{+}+1), & 3 \le l \le N+1 
\end{array}. 
\right.
\end{equation}
Therefore, with \eqref{eq:bbound}, we can have
\begin{equation} \label{eq:bb1}
\norm{-\textbf{b}^{n}}_{2} \le \sqrt{(e \cdot (U_{e}+U_{+}+1)+\frac{2\epsilon_{0}\phi_{RF}}{(\Delta x)^{2}})^{2}+(e \cdot (U_{e}+U_{+}+1))^{2} \cdot (N-1)}=H_{b}.
\end{equation}
Here, $\textbf{H}_{b}$ is a positive constant. Given that the matrix $(-\textbf{A}^{n})^{-1}$ is positive definite, we define its spectral norm as $B_{1}=\norm{(-\textbf{A}^{n})^{-1}}_{2}>0$. Consequently, combining \eqref{eq:ax=be} and \eqref{eq:bb1}, we can bound $\norm{\phi^{n+1}}_{1}$ for $n \ge 0$ as
\begin{equation}
\norm{\phi^{n+1}}_{2} \le \norm{(-\textbf{A}^{n})^{-1}}_{2} \cdot \norm{-\textbf{b}^{n}}_{2} \le B_{1}H_{b}. \label{eq:phin1}
\end{equation}
Recall from \eqref{eq:initial} that $\norm{\phi^{0}}_{2}=F_{3}$. In view of \eqref{eq:phin1}, we conclude that $\norm{\phi^{n}}_{2}$ is bounded from above by
\begin{equation}
U_{\phi}=\max(B_{1}H_{b}, F_{3}), \label{eq:bound2}
\end{equation}
with $n \ge 0$. Here, $B_{1}H_{b}$ is defined in \eqref{eq:phin1}.

\textbf{Uniform bound of $n_{e,i}$}
To establish the lower bound for $n_{e,i}^{n}$, we focus on the Bernoulli function. According to \eqref{eq:initial}, $\abs{\phi_{i}^{n}} \le \norm{\phi^{n}}_{2} \le U_{\phi}$. Therefore, from \eqref{eq:p}, we can bound $\abs{P_{e,i+\frac{1}{2}}}$ as
\begin{equation}
\abs{P_{e,i+\frac{1}{2}}} \le \mu_{e} \cdot \frac{\phi_{i+1}-\phi_{n}}{D_{e}} \le \frac{2\mu_{e}U_{\phi}}{D_{e}}. \label{eq:boundp}
\end{equation}
Since the Bernoulli function $B(x)=\frac{x}{e^{x}-1}>0$ is continuous and strictly decreasing with respect to $x$, combining this property with \eqref{eq:boundp} allows us to bound $B(P_{e})$ as
\begin{equation}
B(P_{e}) \le B(-\abs{P_{e}}) \le \frac{-2\mu_{e}U_{\phi}}{D_{e}(e^{-\frac{2\mu_{e}U_{\phi}}{D_{e}}}-1)}=B_{u}. \label{eq:bu}
\end{equation}
Supposing that $\Delta t$ satisfies \eqref{eq:t1}--\eqref{eq:t4}, and under the conditions \eqref{eq:source}, \eqref{eq:rl} and $k_{b} \le k_{r}^{n} \le k_{0}$, we obtain
\begin{equation} \label{eq:inl}
\textbf{C}_{1}^n n_e^{n+1}=(I_{n}+\Delta t N_{0}K^{n})n_{e}^{n} \ge (1+\Delta tN_{0}k_{b})n_{e}^{n},
\end{equation}
with $K^n=\mbox{diag} (k_{r,0}^n, k_{r,1}^{n}, \dots, k_{r,N-1}^n)$. Based on \eqref{eq:jacobil}-\eqref{eq:jacobir}, \eqref{eq:c11}, \eqref{eq:c12}, \eqref{eq:c1n} and \eqref{eq:c2n}, recalling that the bound $B(P_{e}) \le B_{u}$ in \eqref{eq:bu} has been established under the constraints pf the time step \eqref{eq:t1}-\eqref{eq:t4}, we can get
\begin{equation}
\textbf{C}_{1}^n \mathbf{1}\le (1+\Delta t\Lambda)\mathbf{1}, \label{eq:cbound}
\end{equation}
where $\Lambda=\min\left\{
\frac{4D_eB_u}{(\Delta x)^2},
\frac{2D_eB_u}{(\Delta x)^2}+\frac{K_s}{\Delta x}
\right\}>0$ is independent of $n$, $i$ and $\Delta t$. In the following, we prove, by mathematical induction, that $n_e^n \ge n_e^0 e^{-(\Lambda-k_b)n\Delta t}$.
Let $m=e^{-(\Lambda-k_b)\Delta t}>0$ and assume that
$n_e^n\geq n_e^0m^n\mathbf{1}$. The assertion is immediate for $n=0$. Suppose it holds at the $n$-th time step. Then, at the $(n+1)$-th time step, \eqref{eq:inl} yields
\begin{equation} \label{eq:relation1}
\textbf{C}_{1}^{n} n_{e}^{n+1} \ge (1+\Delta t k_{b})n_{e}^{n} \ge (1+\Delta t k_{b})m^n\mathbf1.
\end{equation}
In view of \eqref{eq:cbound}, provided that $1+\Delta t k_{b}>0$, we also get
\begin{equation} \label{eq:relation2}
\textbf{C}_{1}^{n} (\frac{1+\Delta t k_{b}}{1+\Delta t\Lambda} \cdot m^n\mathbf1) \le (1+\Delta t k_{b}) \cdot m^{n}\mathbf1.
\end{equation}
When $\Delta t$ satisfies \eqref{eq:t1}-\eqref{eq:t4}, $\textbf{C}_{1}^{n}$ is an $M$-matrix, which implies that its inverse is non-negative, \textit{i.e.}, $(\textbf{C}_{1}^{n})^{-1} \ge 0$, and contains no zero rows. Consequently, from \eqref{eq:relation1} and \eqref{eq:relation2}, we obtain
\begin{equation} \label{eq:relation}
n_{e}^{n+1} \ge (\frac{1+\Delta t k_{b}}{1+\Delta t\Lambda} \cdot m^n\mathbf1).
\end{equation}
By setting $m=\frac{1+\Delta t k_{b}}{1+\Delta t\Lambda}$, the induction step is complete. Consequently, we can choose
\begin{equation}
n_{e}^{n} \ge n_{e}^{0}(\frac{1+\Delta t k_{b}}{1+\Delta t\Lambda})^{n} \cdot\mathbf1. \label{eq:nen}
\end{equation}
From \eqref{eq:nen}, if $k_b\geq\Lambda$, then
$\left(\frac{1+\Delta t\,k_b}{1+\Delta t\,\Lambda}\right)^n\geq 1$, and hence $n_e^n \ge n_e^0$.
Therefore, $n_{e,i}^n$ admits the uniform lower bound $n_{e,i}^0$ for $0 \le i \le N-1$. If $k_b<\Lambda$, we define $f(x)=\log(1+\Delta t\,x)$ for $x\geq 0$. Since
$f'(x)=\frac{\Delta t}{1+\Delta t\,x}\leq\Delta t$, 
the mean value theorem gives
\begin{equation} \label{eq:mean}
\log(\frac{1+\Delta t k_{b}}{1+\Delta t\Lambda}) =-(\log(1+\Delta t\Lambda)-\log(1+\Delta t k_{b})) =\frac{\Delta t}{1+\Delta t \cdot \xi} \cdot (-(\Lambda-k_{b})) \ge -\Delta t \cdot (\Lambda-k_{b}),
\end{equation}
where $\xi \in (k_{b}, \Lambda)$. Combining \eqref{eq:mean} with the complementary case $k_{b} \ge \Lambda$, we arrive at
\begin{equation} \label{eq:function}
\frac{1+\Delta tk_{b}}{1+\Delta t\Lambda} \ge e^{-\Delta t(\Lambda-k_{b})_{+}}.
\end{equation}
Here, $x_{+}=\max(x,0)$. For $n \Delta t \le T_{\max}$, substituting \eqref{eq:function} into \eqref{eq:nen} yields
\begin{equation} \label{eq:result}
n_{e}^{n} =n_{e}^{0}(\frac{1+\Delta t k_{b}}{1+\Delta t\Lambda})^{n} \cdot\mathbf1 \ge n_{e}^{0}e^{-n\Delta t(\Lambda-k_{b})_+} \cdot \mathbf{1} \ge n_{e}^{0} e^{-T_{max}(\Lambda-k_{b})_+} \cdot \mathbf{1}.
\end{equation}
In this way, in view of \eqref{eq:result}, we conclude that $n_{e,i}^{n}$ is uniformly bounded by
\begin{equation} \label{eq:l}
L=n_{e,i}^{0} e^{-T_{max}(\Lambda-k_{b})_+}, 
\end{equation}
for $0 \le i \le N-1$, where $\Lambda = \min \left\{ \frac{4D_{e}B_{u}}{(\Delta x)^{2}}, \frac{2D_{e}B_{u}}{(\Delta x)^{2}}+\frac{K_{s}}{\Delta x} \right\}$.

\textbf{Uniform bounds of $E_{e}$ and $T_{e}$}
Next, we consider $E_{e}$ and $T_{e}$. To begin with, we establish bounds for the fluxes. In view of \eqref{eq:sgje12} and \eqref{eq:bu}, when $n_{e}^{n}>0$ and $\norm{n_{e}^{n}}_{1} \le U_{e}$, the flux $\abs{\mbox{J}_{e,i+\frac{1}{2}}}$ can be bounded as
\begin{equation}
\abs{\mbox{J}_{e,i+\frac{1}{2}}} \le \frac{D_{e}}{\Delta x} \cdot (B_{u}U_{e}+B_{u}U_{e})=\frac{2D_{e}B_{u}U_{e}}{\Delta x}=B_{e}. \label{eq:be}
\end{equation}
Given $n_{+}^{n}>0$, $\norm{n_{e}^{n}}_{1} \le U_{e}$, $\norm{n_{+}^{n}}_{1} \le U_{+}$ and $\norm{\phi^{n}}_{2} \le U_{\phi}$, it follows from \eqref{eq:bb}, \eqref{eq:jl} and \eqref{eq:jr} that the fluxes $\abs{\mbox{J}_{e,L}}$ and $\abs{\mbox{J}_{+,L}}$ are bounded by
\begin{equation}
\abs{\mbox{J}_{e,L}} \le K_{s}U_{e}+\gamma \cdot 2\mu_{+}U_{+} \cdot (U_{\phi}+\phi_{RF}) \cdot \frac{1}{\Delta x}=\frac{K_{s}U_{e}\Delta x+2\gamma\mu_{+}U_{+} \cdot (U_{\phi}+\phi_{RF})}{\Delta x} =B_{L}, \label{eq:bl} 
\end{equation}
\begin{equation}
\abs{\mbox{J}_{e,R}} \le K_{s}U_{e}+\gamma \cdot 2\mu_{+}U_{+} \cdot U_{\phi} \cdot \frac{1}{\Delta x} =\frac{K_{s}U_{e}\Delta x+2\gamma\mu_{+}U_{+} \cdot U_{\phi}}{\Delta x} =B_{R}. \label{eq:br}
\end{equation}
In view of \eqref{eq:bl} and \eqref{eq:br}, when $n_{e,i}^{n} \ge L$, the terms $P_{e,L}$ and $P_{e,R}$ can be bounded as
\begin{equation}
\abs{P_{e,L}} \le \frac{5B_{L} \cdot \Delta x}{6D_{e} \cdot L}=B_{pL}, \quad \abs{P_{e,R}} \le \frac{5B_{R} \cdot \Delta x}{6D_{e} \cdot L}=B_{pR}. \label{eq:pe}
\end{equation}
Similarly to \eqref{eq:bu}, utilizing \eqref{eq:pe} allows us to bound $B(P_{e,L}^{n})$ and $B(P_{e,R}^{n})$ as
\begin{equation}
B(P_{e,L}) \le B(-\abs{P_{e,j}})=\frac{-B_{pL}}{e^{-B_{pL}}-1}=B_{bL}, \quad B(P_{e,R}) \le B(-P_{e,R})=\frac{-B_{pR}}{e^{-B_{pR}}-1}=B_{bR}. \label{eq:bbb}
\end{equation}
With these bounds, we focus on $E_{e}$. We rewrite \eqref{eq:generals} as
\begin{equation} \label{eq:eegeneral}
\frac{E_{e,i}^{n+1}-E_{e,i}^{n}}{\Delta t}+\frac{G_{i+\frac{1}{2}}^{n+1}-G_{i-\frac{1}{2}}^{n+1}}{\Delta x}+e\phi_{i}^{n+1} \cdot \frac{\mbox{J}_{e,i+\frac{1}{2}}^{n}-\mbox{J}_{e,i-\frac{1}{2}}^{n}}{\Delta x}=-H_{r}S_{e,i}^{n},
\end{equation}
for $0 \le i \le N-1$. Summing \eqref{eq:eegeneral} over $i$ from $0$ to $N-1$ and multiplying both sides by $\Delta t$, we find that when the source term $H_{r}S_{e,i}^{n}>0$, it follows that
\begin{equation} \label{eq:move}
\sum \limits_{i=0}^{N-1}E_{e,i}^{n+1}-\sum \limits_{i=0}^{N-1}E_{e,i}^{n}+\frac{\Delta t}{\Delta x} \cdot (G_{R}^{n+1}-G_{L}^{n+1}) < -\frac{\Delta t}{\Delta x} \cdot \sum \limits_{i=0}^{N-1}[e\phi_{i}^{n+1} \cdot (\mbox{J}_{e,i+\frac{1}{2}}^{n}-\mbox{J}_{e,i-\frac{1}{2}}^{n})].
\end{equation}
%\begin{equation}
%[e\phi_{N-1}^{n+1} \cdot (\mbox{J}_{e,R}^{n}-\mbox{J}_{e,N-\frac{3}{2}}^{n})+e\phi_{0}^{n+1} \cdot (\mbox{J}_{e,\frac{1}{2}}^{n}-\mbox{J}_{e,L}^{n})+\sum \limits_{i=1}^{N-2} (e\phi_{i}^{n} \cdot (\mbox{J}_{e,i+\frac{1}{2}}^{n}-\mbox{J}_{e,i-\frac{1}{2}}^{n})].
%\end{equation}
With \eqref{eq:bb} and \eqref{eq:glr}-\eqref{eq:setb}, when $n_{e}, T_{e}>0$, we write
$G_{R}^{n+1}-G_{L}^{n+1}=G_{1}^{n+1}+G_{2}^{n+1}$ in \eqref{eq:move} with 
\begin{align}
G_{1}^{n+1} &=\frac{2\kappa_{e,N-1}^{n}}{\Delta x} \cdot B(-P_{e,R}^{n})T_{e,N-1}^{n+1}-\frac{2\kappa_{e,0}^{n}}{\Delta x} \cdot B(P_{e,L}^{n})T_{e,0}^{n+1}>0,
 \label{eq:g1} \\
\begin{split} \label{eq:g2}
G_{2}^{n+1} &=-\frac{2\kappa_{e,N-1}^{n}}{\Delta x} \cdot B(P_{e,R}^{n})T_{e,R}^{n+1}-\frac{2\kappa_{e,0}^{n}}{\Delta x} \cdot B(-P_{e,L}^{n})T_{e,L}^{n+1} \\ &=-\frac{\kappa_{e,N-1}^{n}}{\Delta x} \cdot B(P_{e,R}^{n})-\frac{\kappa_{e,0}^{n}}{\Delta x} \cdot B(-P_{e,L}^{n}). 
\end{split}
\end{align}
Consequently, substituting \eqref{eq:g1} and \eqref{eq:g2} into \eqref{eq:move} yields
\begin{equation} \label{eq:conclusion}
\sum \limits_{i=0}^{N-1}E_{e,i}^{n+1}-\sum \limits_{i=0}^{N-1}E_{e,i}^{n} <-\frac{\Delta t}{\Delta x} \cdot G_{2}^{n+1}-\frac{\Delta t}{\Delta x} \cdot \sum \limits_{i=0}^{N-1}[e\phi_{i}^{n+1} \cdot (\mbox{J}_{e,i+\frac{1}{2}}^{n}-\mbox{J}_{e,i-\frac{1}{2}}^{n})].
\end{equation}
Substituting \eqref{eq:bb}, \eqref{eq:setb}, \eqref{eq:bbb}, \eqref{eq:be}-\eqref{eq:br} and \eqref{eq:g2} into \eqref{eq:conclusion}, under the conditions $n_{e}^{n} \le U_{e}$ and $\phi^{n} \le U_{\phi}$, the RHS of \eqref{eq:conclusion} can be estimated as
\begin{equation} \label{eq:BE}
\begin{split}
&\abs{-\frac{\Delta t}{\Delta x} \cdot \sum \limits_{i=0}^{N-1}[e\phi_{i}^{n+1} \cdot (\mbox{J}_{e,i+\frac{1}{2}}^{n}-\mbox{J}_{e,i-\frac{1}{2}}^{n})]} \\ 
&\le \left.
\begin{aligned}
&\frac{1}{\Delta x} \cdot (\frac{1}{\Delta x} \cdot \frac{3}{2}kD_{e}U_{e} \cdot B_{bR}+\frac{1}{\Delta x} \cdot \frac{3}{2}kD_{e}U_{e} \cdot B_{bL}. \\ 
&+\, e\phi_{RF} \cdot B_{L}+eU_{\phi} \cdot (B_{R}+B_{e})+eU_{\phi}(B_{L}+B_{e})+(N-2) \cdot eU_{\phi} \cdot (B_{e}+B_{e}))
\end{aligned}
\right\} B_{E}.
\end{split}
\end{equation}
Given the initial condition $\norm{E_{e}^{0}}_{1}=\sum_{i=0}^{N-1}E_{e,i}^{0}=F_{4}$ from \eqref{eq:initial}, we substitute \eqref{eq:BE} into \eqref{eq:conclusion}. By iterating the resulting relation inductively $n+1$ times, we obtain
\begin{equation} \label{eq:finalee}
\norm{E_{e}^{n+1}}_{1}\le \sum \limits_{i=0}^{N-1}E_{e,i}^{n}+\Delta t \cdot B_{E} \le \sum \limits_{i=0}^{N-1}E_{e,i}^{0}+(n+1)\Delta t \cdot B_{E} \le F_{4}+T_{max}B_{E},
\end{equation}
for $(n+1)\Delta t \le T_{\max}$, where the initial condition satisfies $\sum \limits_{i=0}^{N-1}E_{e,i}^{0}=\frac{3}{2}k \sum \limits_{i=0}^{N-1}n_{e,i}^{0}T_{e,i}^{0}$ for $n \ge 0$. In this way, from \eqref{eq:finalee}, we conclude that $\norm{E_{e}^{n}}_{1}$ is bounded from above by
\begin{equation}
U_{E}=F_{4}+T_{max}B_{E}, \label{eq:bound4}
\end{equation}
for $n \ge 0$, where $B_{E}$ is defined in \eqref{eq:BE}. To analyze $T_{e}^{n}$, since $E_{e,i}^{n}=\frac{3}{2}kn_{e,i}^{n}T_{e,i}^{n}>0$, the constraint $n_{e,i}^{n} \ge L$ implies that $T_{e,i}^{n}$ is uniformly bounded from above by $\frac{2U_{E}}{3kL}$. Consequently, a uniform upper bound for $\norm{T_{e}^{n}}_{1}$ is given by
\begin{equation}
U_{T}=\frac{2NU_{E}}{3kL}. \label{eq:ut}
\end{equation}

In all, when $\Delta t$ satisfies \eqref{eq:t1}-\eqref{eq:t3} and \eqref{eq:t4}, we always have \eqref{eq:bound1}, \eqref{eq:bound3}, \eqref{eq:bound2}, \eqref{eq:l}, \eqref{eq:bound4} and \eqref{eq:ut}. \eqref{eq:upper} holds. This completes the proof of \cref{theorem:bound}.
\end{proof}

\section{Numerical experiments}
\label{sec:experiments}
This section presents the numerical results of the 1D-CCP using various schemes. We evaluate their accuracy, robustness and the applicable time step over the total simulation time. The parameters for the numerical simulation are specified as follows. The length of the domain for the simulation is $L = 25.4~\mathrm{mm}$, containing the density of the neutral gas $N_0 = 3.2188 \times 10^{22}~\mathrm{m^{-3}}$. The system is driven by an RF voltage with the amplitude $\Phi_{\mathrm{RF}} = 100~\mathrm{V}$ at a radio frequency of $f_{\mathrm{RF}} = 13.56~\mathrm{MHz}$. The coefficients of the transport are $\mu_e = 30.0~\mathrm{m^2\,V^{-1}\,s^{-1}}$, $D_e = 119.87~\mathrm{m^2\,s^{-1}}$, $\mu_{+} = 0.1444~\mathrm{m^2\,V^{-1}\,s^{-1}}$ and $D_{+} = 6.42 \times 10^{-3}~\mathrm{m^2\,s^{-1}}$. At the boundaries, the electron loss coefficient, secondary electron emission coefficient and the energy lost per chemical reaction are defined as $K_s = 1.19 \times 10^5~\mathrm{m\,s^{-1}}$, $\gamma = 0.01$ and $H_{r} = 2.515454 \times 10^{-18}~\mathrm{J}$, respectively. The initial profiles are uniform with $n_{e}^{0} = n_{+}^{0} = \textbf{10}^{\textbf{13}}~\mathrm{m^{-3}}$, $T_{e}^{0} = \textbf{11604}~\mathrm{K}$ and $\phi^{0} = \textbf{0}~\mathrm{V}$. Unless specified otherwise, the domain is discretized into $N = 128$ uniform meshes. Notably, all schemes share the identical physical models, transport datasets, boundary conditions, and the configurations of the linear solver.

\begin{table}
\centering
\caption{Discretization combinations considered in this study. Unless stated
otherwise, SG--Picard denotes the SG-consistent semi-implicit Poisson
treatment including the lagged chemical source term.}
\label{tab:discrete_combinations}
\small
\resizebox{\linewidth}{!}{%
\begin{tabular}{llllll}
\toprule
Scheme & Fluid advance & Particle flux & Poisson treatment &
Energy flux/boundary & Purpose \\
\midrule
\FIPSGEFC & Implicit & SG & SG--Picard & SG-consistent/SG-consistent & Theoretical analysis \\
\FEP & Explicit & Conventional drift--diffusion & Explicit & Conventional/Conventional & Explicit reference \\
\FIP & Implicit & Conventional drift--diffusion & Semi-implicit & Conventional/Conventional & Comparison \\
\FIPSG & Implicit & SG & SG--Picard & Conventional/Conventional & Comparison \\
\FIPSGEBC & Implicit & SG & SG--Picard & Conventional/EBC & Efficiency comparison \\
\bottomrule
\end{tabular}%
}
\end{table} 

The five discretizations considered in this work are summarized in \cref{tab:discrete_combinations}. The \FIPSGEFC discretization is used in the theoretical analysis and in the stability, robustness, and accuracy
studies below. The four related discretizations \FEP, \FIP, \FIPSG and \FIPSGEBC are compared separately in \cref{sec:efficiency} to assess the verified time-step range and total computational cost. \FIPSGEBC retains the conventional interior electron-energy flux while applying the EBC treatment at the walls. All numerical experiments are performed on an Intel Xeon w5-2455X processor using GNU C++ 9.4.0 in Release mode.

% TODO for the final theory--experiment comparison:
% Extend the phasewise bridge to the uniform-boundedness sufficient-condition
% family once its notation and dimensional mapping have been finalized.

\subsection{Numerical stability and robustness}
\label{sec:scheme_assessment}
Before considering the numerical stability and robustness of \FIPSGEFC, we start with the time step $\Delta t$ first. We parameterize the time increment by the number of steps per RF cycle, $N_{\mathrm{RF}}$, as $T_{\mathrm{RF}} = \frac{1}{f_{\mathrm{RF}}}$ and $\Delta t = \frac{T_{\mathrm{RF}}}{N_{\mathrm{RF}}}$. Although the sufficient conditions derived in \eqref{eq:t1}-\eqref{eq:t3} and \eqref{eq:t4} guarantee the uniqueness, positivity and the uniform bounds of the variables, they do not dictate the sharp threshold in the practical simulations. Therefore, the applicable range of $\Delta t$ is evaluated numerically. We take the numerical simulation for $1500$ RF cycles. For \FIPSGEFC, a time step of $\Delta t = 9.60238 \times 10^{-11}~\mathrm{s}$ (corresponding to $N_{\mathrm{RF}} = 768$) is adopted. At this time step, $n_e$, $n_+$, and $T_e$ remain strictly positive throughout this long-term simulation. The solution successfully approaches a finite periodic steady state, validating the conservative and positivity-preserving properties established in the theoretical analysis.

In the following, we evaluate the robustness of \FIPSGEFC across a parameter sweep of $125$ cases combining $p \in \{0.1, 0.3, 0.5, 1, 2\}~\mathrm{Torr}$, $\Phi_{\mathrm{RF}} \in \{100, 120, 140, 160, 200\}~\mathrm{V}$ and $f_{\mathrm{RF}} \in \{6.78, 13.56, 27.12, 40.68, 54.24\}~\mathrm{MHz}$. While the ionization model is retained, transport coefficients are scaled with pressure. Rather than replicating the specific configuration of the main benchmark, this broad parameter sweep provides a stringent robustness assessment. \cref{fig:robustness_map} maps the resulting periodic states across different $p$, with the axes representing $\Phi_{\mathrm{RF}}$ and $f_{\mathrm{RF}}$. Symbols and colors denote the converged states, whereas $\times$ marks the cases failing to reach periodicity within the prescribed window. Out of $125$ cases, $122$ cases satisfy the criterion of the convergence and $120$ cases achieve the sustained discharges. The remaining $3$ cases without convergence occur under the low-frequency, high-voltage and the low-pressure conditions, suggesting that a longer integration duration is necessary. Within this sweep, the time step $\Delta t$ is adaptively configured based on $f_{\mathrm{RF}}$ and $p$ instead of being fixed, ranging from $9.2183 \times 10^{-11}$ to $7.3746 \times 10^{-10}~\mathrm{s}$. Specifically, The maximum verified reliable time step decreases as the frequency increases, dropping from $2.00 \times 10^{-10}~\mathrm{s}$ (at $6.78--13.56~\mathrm{MHz}$) to $9.2183 \times 10^{-11}~\mathrm{s}$ (at $54.24~\mathrm{MHz}$). Overall, \FIPSGEFC demonstrates exceptional numerical robustness across the majority of the tested operating space.

\begin{figure}
\centering
\includegraphics[width=.65\linewidth]{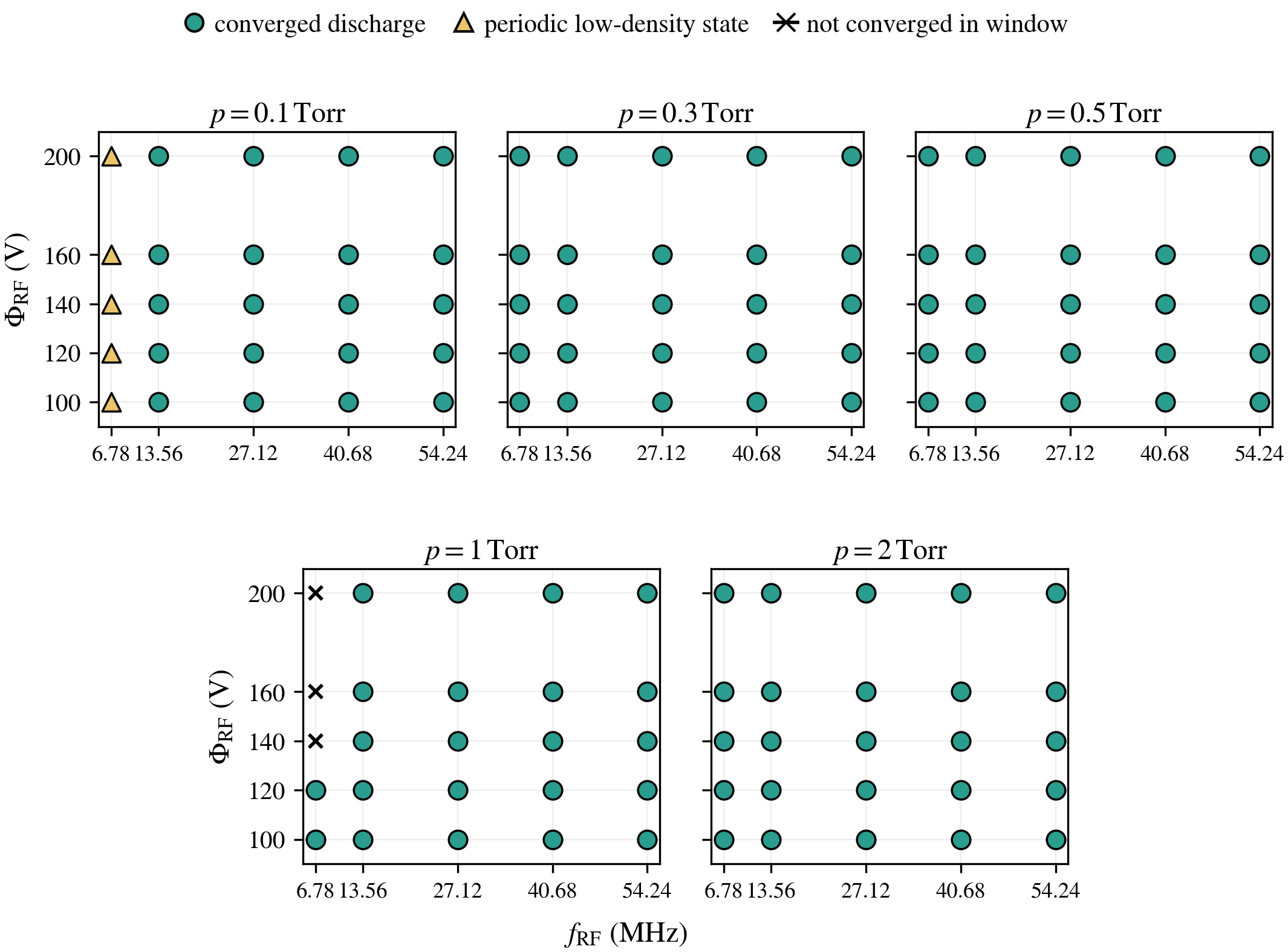}
\caption{Periodic-state classification of \FIPSGEFC over 125 pressure--voltage--frequency combinations}
\label{fig:robustness_map}
\end{figure}

Numerical convergence alone does not guarantee a physically meaningful discharge. \cref{fig:robustness_profiles} presents the cycle-averaged profiles of $n_e$, $n_+$, $T_e$ and $\phi$ at $p \in \{0.3, 0.5, 1, 2\}~\mathrm{Torr}$ for $\Phi_{\mathrm{RF}} = 140~\mathrm{V}$ and $f_{\mathrm{RF}} = 13.56~\mathrm{MHz}$. Across all the cases, the solutions capture the characteristic bulk-plasma and sheath structures consistently. These profiles replicate the expected spatial organization of a CCP discharge well, further verifying the physical validity and the robustness of \FIPSGEFC.

\begin{figure}
\centering
\includegraphics[width=.65\linewidth]{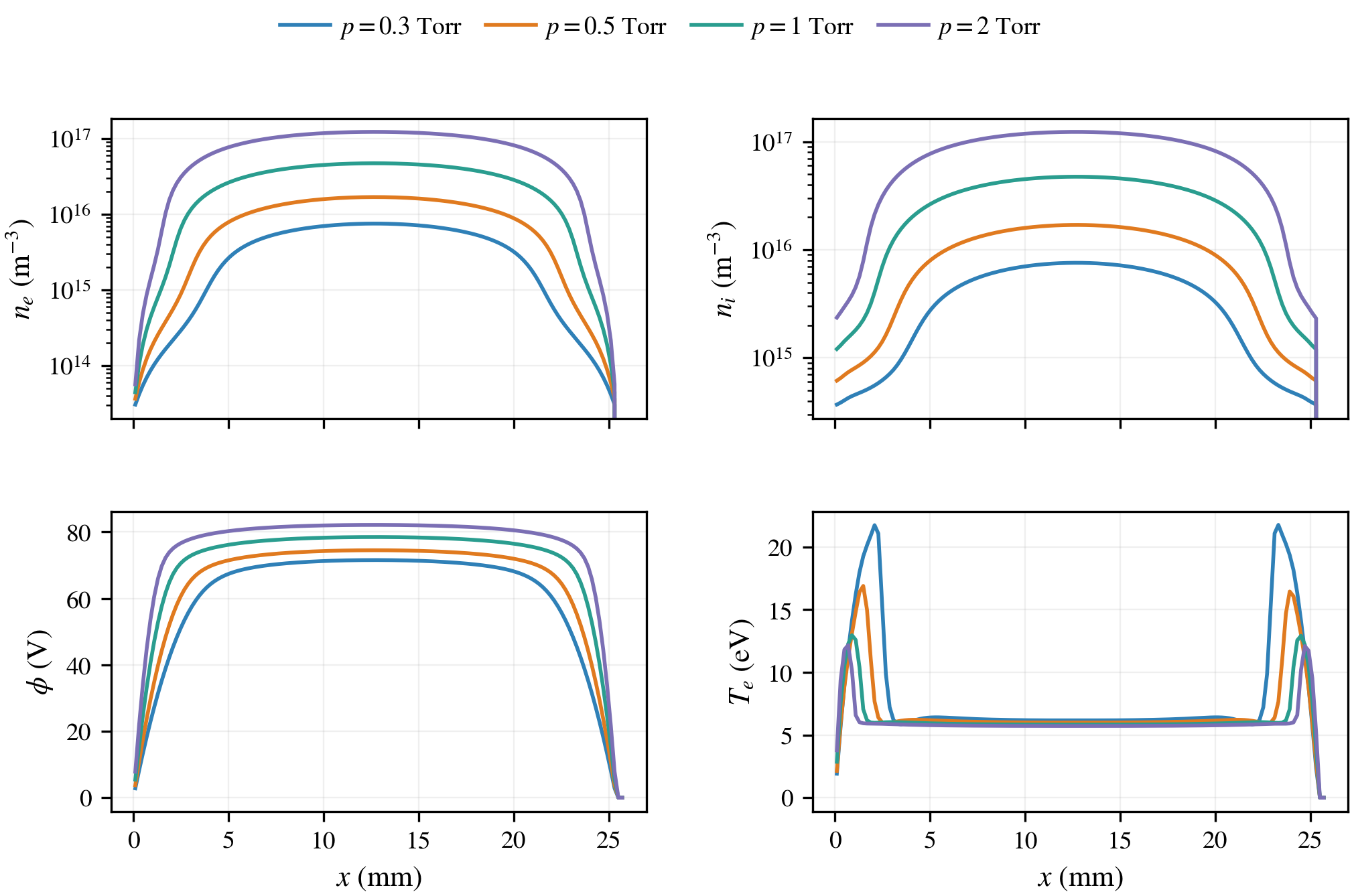}
\caption{Cycle-averaged discharge profiles produced by \FIPSGEFC at $4$
pressures with $\Phi_{\mathrm{RF}}=140~\mathrm{V}$ and
$f_{\mathrm{RF}}=13.56~\mathrm{MHz}$}
\label{fig:robustness_profiles}
\end{figure}

To bridge the long-term simulations with the sufficient conditions derived in \cref{sec:analysis}, phase-dependent bounds are evaluated over a single RF cycle at the $500$-th, $1000$-th and the $1500$-th cycles, as shown in \cref{fig:theory_dt_bridge}. The left and right panels correspond to \eqref{eq:t1}-\eqref{eq:t3}, respectively, while results for \eqref{eq:t4} are omitted. For the electron energy equation \eqref{eq:3}, only the interior $t_2$ bound is plotted, excluding the additional condition for the uniform bound. The horizontal dotted line marks the maximum verified reliable time step ($\Delta t = 9.60238 \times 10^{-11}~\mathrm{s}$). At these three checkpoints, the minimum density-related bounds are $1.2088 \times 10^{-10}$, $1.2026 \times 10^{-10}$ and $1.2011 \times 10^{-10}~\mathrm{s}$, whereas the corresponding bounds for the electron energy are $1.6094 \times 10^{-11}$, $1.5687 \times 10^{-11}$ and $1.5592 \times 10^{-11}~\mathrm{s}$. The nearly overlapping curves confirm the approach to a periodic steady state. Notably, the density-related bounds remain comparable to the empirical reliable time step, whereas the bound for the electron energy is more restrictive, with its minimum at the $1500$-th cycle being approximately $16.1\%$ of the empirical value. This discrepancy highlights the conservative nature of the sufficient conditions and should not be misconstrued as a sharp prediction of the actual stability limit or an independent validation of the theoretical results.

\begin{figure}
\centering
\includegraphics[width=.65\linewidth]{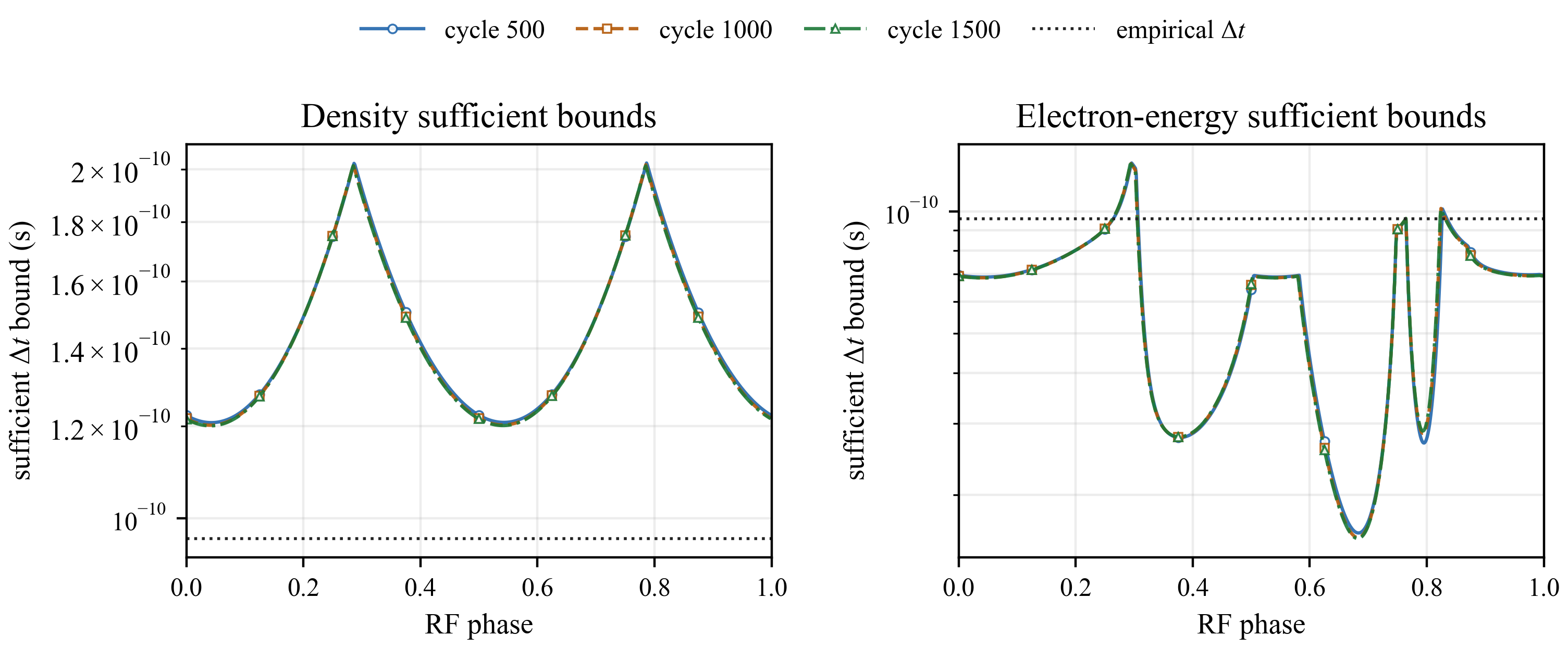}
\caption{Phase-dependent evaluated sufficient time-step bounds for \FIPSGEFC
at RF cycles $500, 1000, 1500$}
\label{fig:theory_dt_bridge}
\end{figure}

\subsection{Temporal and spatial accuracy}
\label{sec:accuracy}
Numerical robustness alone does not guarantee the phase-resolved accuracy. Therefore, the temporal and spatial convergence are assessed exclusively for \FIPSGEFC. 
For temporal refinement, the mesh size is fixed at $N=128$, while the number of time steps per RF cycle varies within $N_{\mathrm{RF}}=768, 1536, 3072, 6144$. 
Each case is initialized from the same periodic steady state. The weighted relative $L^2$ errors for $n_e$, $n_{+}$, $T_e$ and $\phi$ are evaluated at four identical RF phases, relative to the $N_{\mathrm{RF}}=6144$ baseline. As shown in \cref{fig:temporal_refinement}, the error decreases with smaller $\Delta t$ (larger $N_{\mathrm{RF}}$), aligning well with the first-order reference slope $\mathcal{O}(\Delta t)$.

All errors decrease consistently with time-step refinement. At phase zero, increasing $N_{\mathrm{RF}}$ from $768$ to $3072$ reduces the errors in $n_e$, $n_{+}$, $T_e$ and $\phi$ from $0.106\%$, $0.0930\%$, $0.1235\%$ and $0.560\%$ to $0.0152\%$, $0.0133\%$, $0.0176\%$ and $0.0813\%$, respectively. Over the finer intervals, $n_e$ and $n_{+}$ exhibit a first-order temporal convergence approximately. In contrast, the errors in $T_e$ and $\phi$ depend more strongly on both the RF phase and the refinement level. Consequently, while larger time steps suffice for the long-time stable integration, capturing accurate phase-resolved $T_e$ and $\phi$ profiles demands finer temporal resolution.

\begin{figure}
\centering
\includegraphics[width=.65\linewidth]{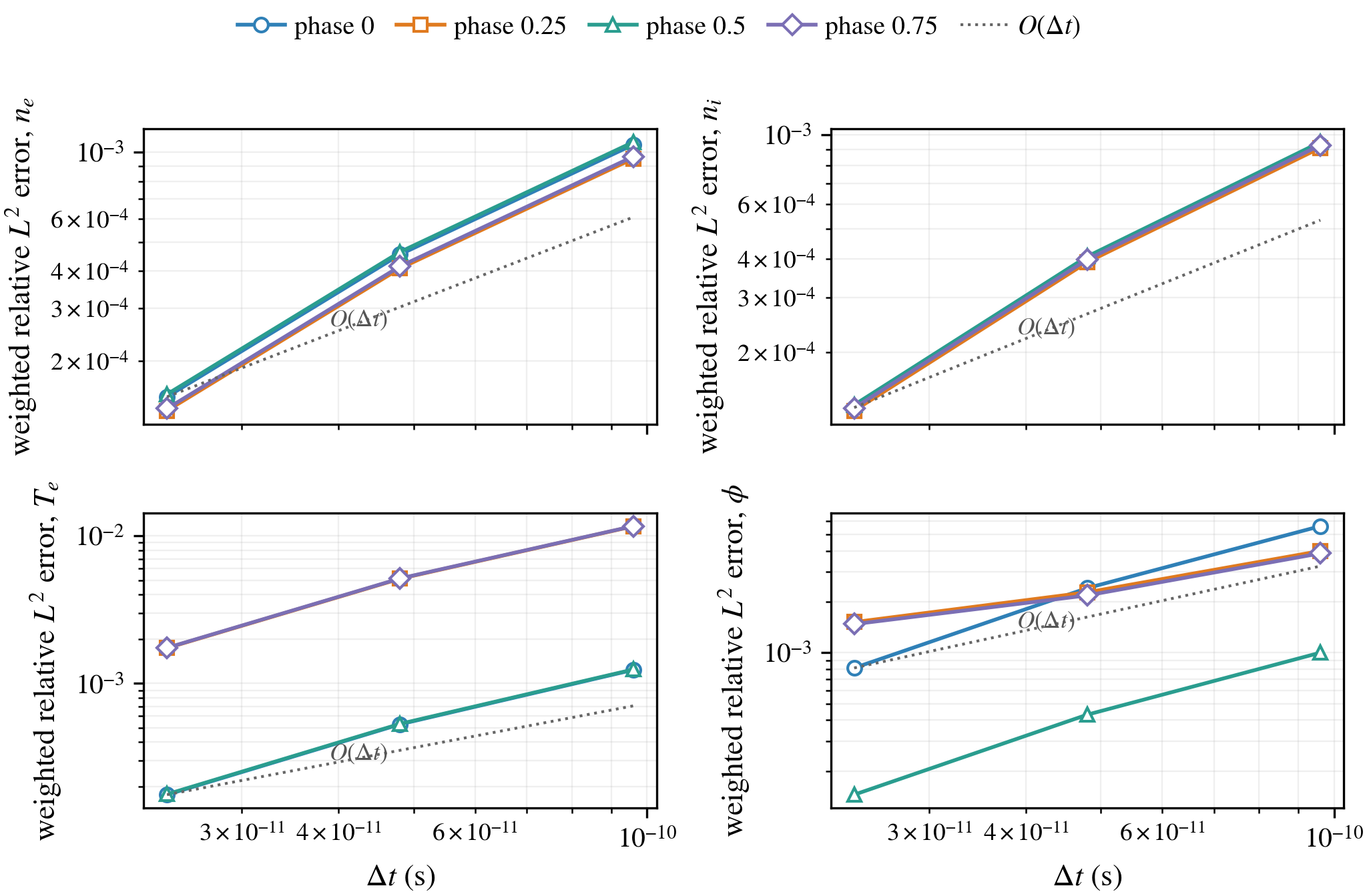}
\caption{Phase-resolved temporal-refinement errors of \FIPSGEFC on the
$N=128$ mesh}
\label{fig:temporal_refinement}
\end{figure}

To verify that $1500$ RF cycles yield a representative long-time state, the main-case simulation with $N_{\mathrm{RF}}=768$ is extended to $5000$ cycles. The relative differences in cycle-averaged $n_e, n_{+}, T_e, \phi$ are $0.6509\%$, $0.6451\%$, $0.0412\%$ and $0.0568\%$, respectively. These results support the use of $1500$ RF cycles as a common integration window for scheme comparisons in this benchmark.

For spatial refinement, we fix the temporal resolution at $N_{\mathrm{RF}}=2048$ while the uniform mesh size varies ($N=128, 256, 512, 1024$), with $N=1024$ serving as the reference. Coarser solutions are linearly interpolated to the reference cell centers for comparison. As shown in \cref{fig:spatial_refinement}, all errors decrease monotonically with the cell width $\Delta x$. From $N=256$ to $512$, the spatial convergence orders for $n_e$, $n_{+}$, $T_e$ and $\phi$ are approximately $1.68$, $1.68$, $0.94$ and $1.63$, respectively. The lower order of convergence for $T_e$ is consistent with its heightened sensitivity to sheath and boundary-layer discretization.

\begin{figure}
\centering
\includegraphics[width=.65\linewidth]{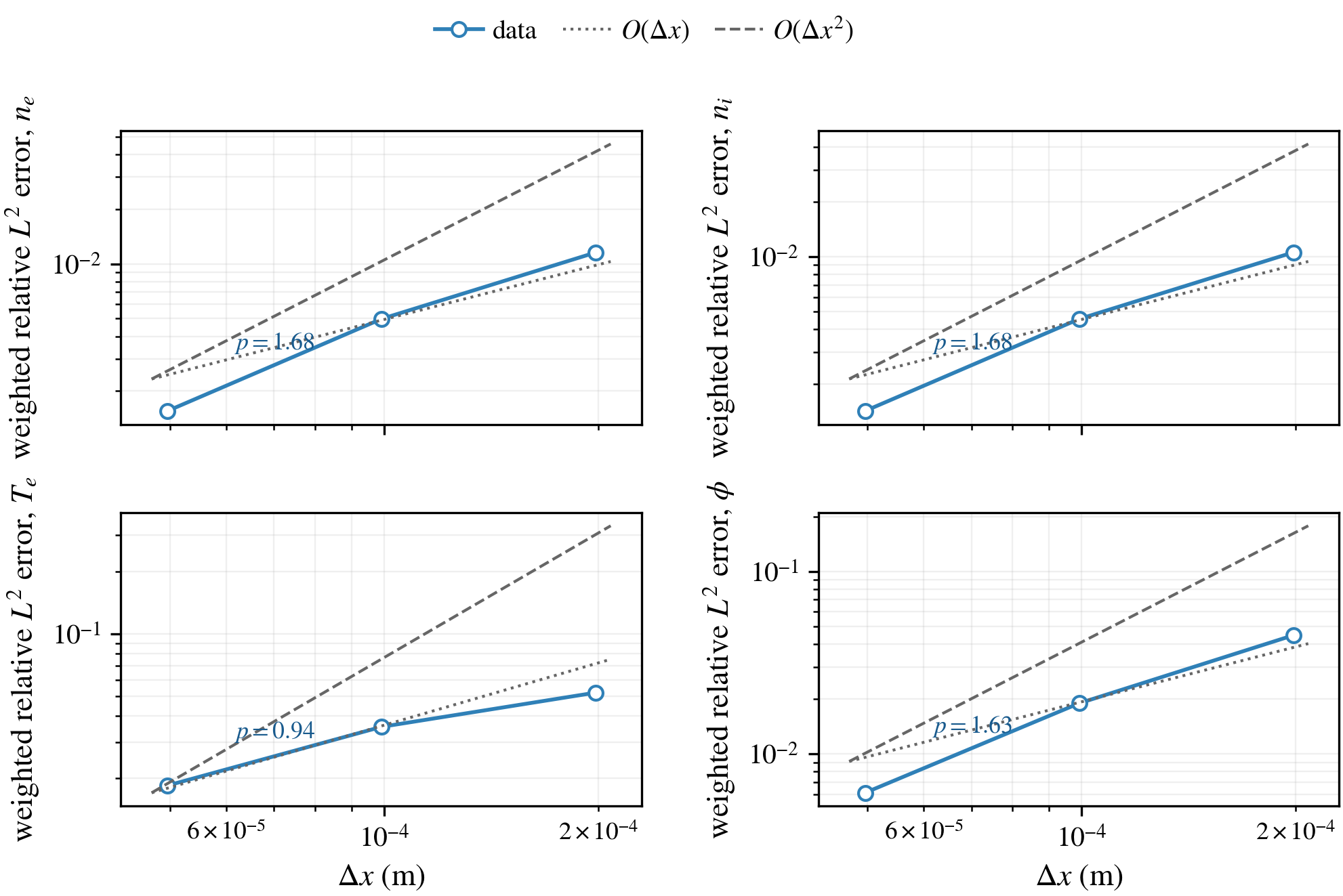}
\caption{Spatial-refinement errors of \FIPSGEFC for
$N_{\mathrm{RF}}=2048$}
\label{fig:spatial_refinement}
\end{figure}

\subsection{Computational efficiency}
\label{sec:efficiency}
While \FIPSGEFC is used for the preceding analysis because of its complete discrete structure and mathematical properties, analytical tractability does not imply runtime optimality. Therefore, we compare \FEP, \FIP, \FIPSG and \FIPSGEBC in terms of their maximum verified reliable time steps, wall-clock times over $1500$ RF cycles and speedups relative to \FEP. Here, the \emph{maximum verified reliable time step} is the largest tested $\Delta t$ that maintains a physically admissible state throughout the prescribed integration interval. It is an empirical quantity rather than a theoretical continuous stability limit.

All four comparison schemes use identical physical, grid and solver configurations. Each timing measurement is repeated ten times. The median wall-clock time over $1500$ RF cycles is reported, with the first ($Q1$) and the third ($Q3$) quartiles quantifying the run-to-run variation. The results are summarized in \cref{tab:reference_efficiency} and \cref{fig:reference_efficiency}. The median runtimes for \FEP, \FIP, \FIPSG and \FIPSGEBC are $77.32$~s, $30.56$~s, $19.45$~s and $14.56$~s, respectively, corresponding to speedups of $1.00$, $2.53$, $3.98$ and $5.31$. Their corresponding maximum verified reliable time steps are $8.23\times10^{-11}$~s, $1.15\times10^{-9}$~s, $2.30\times10^{-9}$~s and
$3.07\times10^{-9}$~s.

Numerical results demonstrate the successive benefits of the implicit electron energy advancement, the SG transport scheme and the energy-boundary treatment. Although implicit formulations entail higher computational overhead per iteration, their significantly larger permissible time steps heavily reduce the total time-to-solution, which serves as the primary metric for the long-time CCP simulations. Moreover, in the practical implementation, the computational efficiency of \FIPSGEBC can be further enhanced by employing an approximate implicit framework that omits the chemical source term $S_{e}^{n}$ within the semi-implicit Poisson solver.

\begin{table}
\centering
\caption{Repeated timing statistics for different numerical schemes over $1500$ RF cycles}
\label{tab:reference_efficiency}
\small
\begin{tabular}{lrrrr}
\toprule
Scheme & Median time (s) & Q1 (s) & Q3 (s) & Speedup relative to \FEP \\
\midrule
\FEP & 77.32 & 76.99 & 77.63 & 1.00 \\
\FIP & 30.56 & 30.35 & 31.01 & 2.53 \\
\FIPSG & 19.45 & 19.39 & 19.53 & 3.98 \\
\FIPSGEBC & 14.56 & 14.52 & 14.60 & 5.31 \\
\bottomrule
\end{tabular}
\end{table}

\begin{figure}
\centering
\includegraphics[width=.65\linewidth]{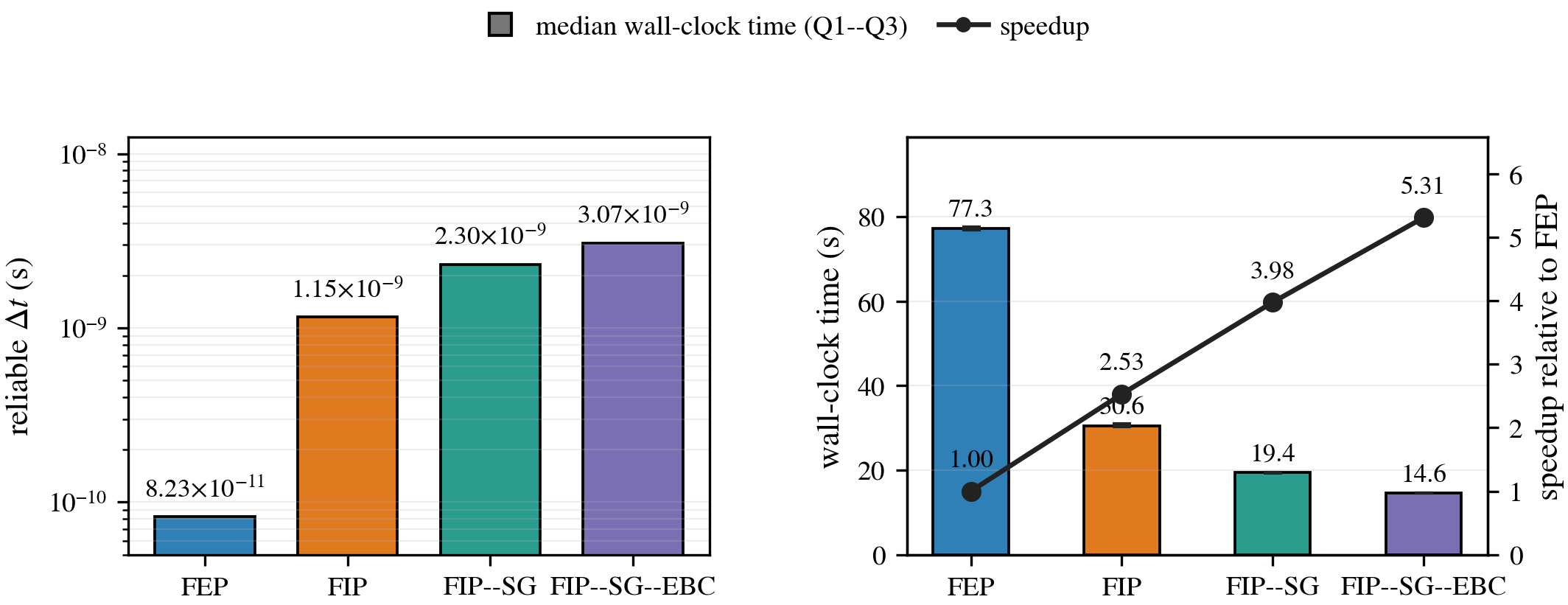}
\caption{Verified reliable time steps and repeated timing results for different numerical schemes}
\label{fig:reference_efficiency}
\end{figure}

To further examine the influence of the chemical source term in the Poisson predictor on the computational efficiency, we compare the SG-based schemes with and without this term. In the source-off settings, the lagged chemical source $S_e^n$ is omitted only from the SG-Picard Poisson predictor, while the chemical source remains active in the subsequent implicit fluid update. Therefore, the underlying physical model is unchanged. Numerical experiments are shown in \cref{tab:source_ablation}.

For \FIPSG and \FIPSGEBC, removing the chemical source from the Poisson predictor does not change the verified time-step settings, which remain $N_{\mathrm{RF}}=32$ and $N_{\mathrm{RF}}=24$, respectively. Under the same settings, the median runtime of \FIPSG decreases from $19.45$~s to $17.70$~s, while that of \FIPSGEBC decreases from $14.56$~s to $13.33$~s, corresponding to reductions of approximately 8.97\% and 8.44\%, respectively. A similar reduction is observed for \FIPSGEFC, for which the median runtime decreases from $487.16$~s to $443.27$~s, corresponding to a reduction of 9.01\%.

\begin{table}
\centering
\caption{Effect of the chemical source term in the Poisson predictor on the computational time of the SG-based schemes}
\label{tab:source_ablation}
\begin{tabular}{lccccc}
\hline
Scheme & Predictor source & $N_{\mathrm{RF}}$ & Runs & Time (s) & Q1--Q3 (s) \\
\hline
\FIPSG    & on  & 32  & 10 & 19.45 & 19.39--19.53 \\
\FIPSG    & off & 32  & 10 & 17.70 & 17.68--17.77 \\
\FIPSGEBC & on  & 24  & 10 & 14.56 & 14.52--14.60 \\
\FIPSGEBC & off & 24  & 10 & 13.33 & 13.30--13.43 \\
\FIPSGEFC & on  & 768 & 10 & 487.16 & 486.69--487.98 \\
\FIPSGEFC & off & 768 & 10 & 443.27 & 442.91--443.43 \\
\hline
\end{tabular}
\end{table}

\section{Conclusions}
\label{sec:conclusions}
This work investigates the numerical schemes of the 1D-CCP. We introduce a new numerical scheme that combines a semi-implicit solver for the Poisson's equation with the Scharfetter-Gummel discretizations for the continuity and the electron energy equations, which can also be extended to the 2D cases. Several schemes are compared in terms of the efficiency, accuracy and the robustness. Among them, a hybrid scheme with different schemes for different equations provides the best overall performance. Some details in the implementation are also discussed.

\section*{Acknowledgement}
\label{sec:acknowledgement}
The authors declare that they have no conflict of interest. Authors are listed in alphabetical order. We thank Qian Yin from the Hong Kong Polytechnic University for the discussion about this topic.

\bibliography{references}
\end{document}